\documentclass[12pt,a4paper]{amsart}

\usepackage[utf8]{inputenc}
\usepackage{fancyhdr}
\usepackage{indentfirst}
\usepackage{amsmath}
\usepackage{amssymb}
\usepackage{amsthm}
\usepackage[hidelinks,colorlinks,
breaklinks]{hyperref}
\usepackage{latexsym}
\usepackage{commath}
\usepackage{emptypage}
\usepackage{geometry}
\usepackage{enumerate}
\usepackage{enumitem}
\setlist[itemize]{label=\textbullet}
\usepackage{tikz-cd} 
\usepackage{comment}
\usepackage{mathtools}
\usepackage{leftidx}
\usepackage{todonotes}

\newtheorem{theorem}{Theorem}[section]
\newtheorem{lemma}[theorem]{Lemma}
\newtheorem{proposition}[theorem]{Proposition}
\newtheorem{cor}[theorem]{Corollary}

\theoremstyle{definition}

\newtheorem{definition}[theorem]{Definition}
\newtheorem{rmk}[theorem]{Remark}
\newtheorem{notation}[theorem]{Notation}
\newtheorem{example}[theorem]{Example}

\newcommand{\sslash}{\mathbin{/\mkern-5mu/}}

\newcommand{\eff}{\overline{\mathrm{Eff}}}

\newcommand{\oo}{\mathcal{O}}
\newcommand{\oa}{\mathcal{A}}
\newcommand{\ob}{\mathcal{B}}

\newcommand{\ou}{\mathcal{U}}
\newcommand{\oc}{\mathcal{C}}

\newcommand{\ol}{\mathcal{L}}

\newcommand{\ve}{\varepsilon}

\newcommand{\qq}{\mathbf{Q}}
\newcommand{\pp}{\mathbf{P}}

\DeclareMathOperator{\codim}{codim}

\DeclareMathOperator{\pic}{Pic}
\DeclareMathOperator{\nef}{Nef}
\DeclareMathOperator{\mov}{Mov}

\DeclareMathOperator{\proj}{Proj}

\author{Davide Ricci}

\usepackage[
backend=bibtex,
style=alphabetic,
sorting=nty,
maxbibnames=13,
doi=false, url=false, isbn=false
]{biblatex}

\begin{document}

\title{The Mori cone of certain Hassett spaces}

\begin{abstract}
In this paper, we prove that the Mori cone of Hassett spaces whose universal family is a $\pp^1$-bundle is generated by 1-dimensional strata. This extends the case of the symmetric GIT quotient $(\pp^1)^n\sslash PGL_2$ established by Bolognesi and Massarenti.
Along the way, we review how these spaces are naturally isomorphic to certain GIT quotients $(\pp^1)^n\sslash PGL_2$, characterize them as the targets of the birational contractions of the blow-up of $\pp^{n-3}$ at $n-1$ general points with $\qq$-factorial image, and deduce that their effective cone is likewise generated by strata.
\end{abstract}
\maketitle

\section{Introduction}

A central object in the Minimal Model Program is the study of the Mori cone $\overline{NE}(X)$ of a variety $X$, that is, the closure of the cone spanned by the classes of irreducible curves in the vector space of 1-cycles up to numerical equivalence. This cone governs several geometric properties of $X$. For example, extremal rays having negative intersection with the canonical divisor induce birational contractions to spaces with mild singularities.

A long-standing problem is determining the Mori cone of the moduli space $\overline M_{g,n}$ of stable genus $g$ curves with $n$ markings. This space compactifies $M_{g,n}$, the moduli space of smooth genus $g$ curves with $n$ markings, and is a stratified topological space: a $k$-codimensional stratum is an irreducible component parameterizing curves having at least $k$ nodes. The \textit{F-conjecture}, as stated in \cite{GKM}, predicts that $\overline{NE}(\overline M_{g,n})$ is generated by 1-dimensional strata. In \cite{GKM}, the authors reduce the conjecture to proving the genus 0 case. In particular, the conjecture for $\overline M_{g,n}$ is equivalent to the conjecture on $\overline M_{0,g+n}/S_g$, that is, the quotient of $\overline M_{0,g+n}$ by the action of the symmetric group $S_g$ that identifies the first $g$ marked points. The conjecture was proved for $\overline{M}_{0,n}$ for $n \le 7$ in \cite{keel1996contractibleextremalraysoverlinem0n}. For example, $\overline{M}_{0,5}$ is a del Pezzo surface of degree five and its Mori cone is well-known to be generated by (-1)-curves. Recently, Fedorchuk and Mellit proved in \cite{FM} that the conjecture on $\overline M_{0,n}$ for all $n$ holds if and only if the conjecture on $\overline M_{0,n}/S_n$ holds for all $n$. They also settled the case $\overline M_{0,8}$. On the other hand, in \cite{CTMDS},\cite{Gonzalezkaru},\cite{Hausen2016OnBU},\cite{mullane2025isoresidualfibrationsbirationalgeometry}, it was proved that $\overline M_{0,n}$ is not a Mori dream space for $n\ge8$. Mori dream spaces are a particularly well-behaved class of spaces for the minimal model program and include Fano varieties and, more generally, varieties of Fano type. Moreover, in \cite{keel1996contractibleextremalraysoverlinem0n} it was proved that $-K_{\overline M_{0,n}}$ is not effective for $n\ge8$. These facts show the complicated nature of this space, making it harder to study. In this paper, we approach a similar problem for simpler spaces.
\begin{definition}\label{Hasspace}
    Given a weight tuple $\oa=(a_1,\dots,a_n)$, $a_i\in\qq$, $0<a_i\le1$, $\sum a_i>2$, a \textit{weighted pointed $\oa$-stable rational curve} is an arithmetic genus zero connected nodal curve with $n$ smooth points $(C,p_1,\dots,p_n)$ such that $\omega_C(\sum a_ip_i)$ is ample and, every time some of the points coincide, the sum of their weights is at most one. We call \textit{Hassett space} the fine moduli space  $\overline M_\oa$ that parametrizes $\oa$-stable curves. We say that $\overline M_\oa$ is \textit{small} if its universal family is a $\pp^1$-bundle.
\end{definition}
These spaces were introduced by Hassett in \cite{hassett2002modulispacesweightedpointed} and provide different compactifications for $M_{0,n}$. The spaces $\overline M_\oa$ are generally easier to study, as they are targets of blow-up morphisms $\overline M_{0,n}\to\overline{M}_\oa$, and carry an analogous stratification of the boundary. Hence, it is natural to ask whether it is true that the Mori cone of $\overline{M}_\oa$ is generated by 1-dimensional strata. Moreover, the F-conjecture implies the same statement for any of these spaces $\overline M_\oa$. In this paper, we focus on small Hassett spaces, which are the simplest case. The main results are the following.
\begin{theorem}\label{maincor}\begin{enumerate}
    \item\label{one} The Mori cone of a small Hassett space $\overline{NE}(\overline M_{\oa})$ is finitely generated by 1-dimensional strata.
    \item\label{two} The cone of effective divisors of a small Hassett space $\overline{\textup{Eff}}(\overline M_{\oa})$ is finitely generated by 1-codimensional strata. 
    \item Small Hassett spaces are Mori dream spaces. 
    \item  Over the complex numbers, small Hassett spaces are of Fano type.
    \item Small Hassett spaces can be identified with the GIT quotients $(\pp^1)^n\sslash PGL_2$ with no strictly semi-stable points.
    \item\label{th126} The targets of the birational contractions with $\qq$-factorial image of the blow-up of $\pp^{n-3}$ at $n-1$ points in general position $Bl_{n-1}\pp^{n-3}$ are precisely the small Hassett spaces. In particular, they are all smooth.
\end{enumerate}
\end{theorem}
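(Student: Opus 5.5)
The plan is to first establish the structural statements (5) and (6), and then obtain (1)–(4) from them by working inside a single Mori dream space, namely $X=Bl_{n-1}\pp^{n-3}$.

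\textbf{Step 1 (item 5).} When $\overline M_\oa$ is small, the universal family is a $\pp^1$-bundle, so every fibre is irreducible. Thus every $\oa$-stable curve is a $\pp^1$ carrying $n$ points in which any colliding subset $I$ has $\sum_{i\in I}a_i\le 1$. Because no curve is nodal, such an $I$ can never be "heavy", i.e. $\sum_{i\in I} a_i>1$ never has a complementary configuration forcing a node. This should mean that no subset satisfies $\sum_{I}a_i=\frac12\sum a_i$. I would then compare with the Hilbert–Mumford criterion for $(\pp^1)^n\sslash_{\mathcal L}PGL_2$ with linearization $\mathcal L=\oo(a_1,\dots,a_n)$ (suitably rescaled). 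A configuration is stable iff every collision set has weight $<\frac12\sum a_i$. After rescaling, the weights of a small Hassett space can be chosen so that "weight $\le 1$" agrees with GIT stability, and there are no strictly semistable points. Both spaces are fine, or geometric, quotients of the same open locus of $(\pp^1)^n$ minus the diagonal strata. The universal family therefore gives a morphism $\overline M_\oa\to (\pp^1)^n\sslash PGL_2$ which is bijective onto a normal variety, hence an isomorphism.

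\textbf{Step 2 (item 6).} Take $\oa=(1,\ve,\dots,\ve)$ with $n-1$ light points; this gives $\overline M_\oa\cong Bl_{n-1}\pp^{n-3}$ by Hassett/Kapranov. Every small Hassett space receives a birational map from it, via the reduction morphisms and wall crossings among weights in the small chamber structure. I would use that $X$ is a Mori dream space (Castravet–Tevelev / Mukai, since $n-1\le (n-3)+3$ points in $\pp^{n-3}$). Its Cox ring is generated by exceptional divisors and the hyperplanes through subsets of points. Its Mori chambers should then be matched with the GIT chambers of $(\pp^1)^n$ under $PGL_2$. Concretely, I would identify $\pic(X)$ with the space of linearizations modulo the trivial part, using $\oo(a_1,\dots,a_n)\mapsto \frac{\sum a_i-2}{2}H-\sum(\cdot)E_i$ through the Kapranov/Gale-duality description. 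This should show that the full-dimensional chambers of $\mov(X)$ are exactly the GIT chambers without strictly semistable points. Each $\qq$-factorial birational contraction corresponds to a chamber, hence to a small Hassett space, and smoothness follows from Step 1 since the GIT quotient is a fine quotient with free $PGL_2$ action. This identification of the Mori chamber decomposition with the VGIT decomposition is what I expect to be the main obstacle. It requires Gale duality (Thaddeus/Dolgachev–Hu type) and a careful treatment of boundary chambers not in $\mov(X)$, i.e. the divisorial contractions.

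\textbf{Step 3 (items 2, 3, 4).} Small Q-factorial modifications and contractions of a Mori dream space are Mori dream spaces, which gives (3). For (2), $\eff(\overline M_\oa)$ is the pushforward of $\eff(X)$, so it is generated by images of the Cox generators. These are the exceptional divisors and the hyperplanes through $n-3$ points, which are exactly boundary divisors $D_{I}$, i.e. 1-codimensional strata. For (4), $X$ is log Fano over $\mathbb C$, since the blow-up of $\leq n-1$ general points of $\pp^{n-3}$ is of Fano type. Fano type is preserved under birational contractions that are $\qq$-factorial, by the result of Göbel–Prokhorov-type/Kollár: run an MMP for $-(K+\Delta)$.

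\textbf{Step 4 (item 1).} Since $\overline M_\oa$ is a Mori dream space, $\overline{NE}$ is dual to $\nef(\overline M_\oa)$, which is a chamber of the decomposition. Its facets correspond to walls crossed by changing one weight so that some $\sum_I a_i$ hits $\frac12\sum a_i$. Each wall gives a contraction whose exceptional curves are the fibres over points where the configuration becomes strictly semistable. Those curves are $\pp^1$'s parametrizing $|I|$ collided points with the remaining $n-|I|$ points moving, when the complement has exactly three "free" positions. These are 1-dimensional strata, and they span the extremal rays dual to the facets, proving (1).
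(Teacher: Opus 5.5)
Your Steps 1--3 follow essentially the paper's route, but Step 4, your own argument for (1), has a genuine gap. The duality $\overline{NE}=\nef^\vee$ reduces (1) to showing that \emph{every} extremal contraction of $\overline M_\oa$ contracts some 1-dimensional stratum. You only treat the contractions attached to VGIT walls $\sum_{i\in I}b_i=1$, and you assert that these account for all facets of $\nef(\overline M_\oa)$. That is false, for two reasons.

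First, facets can lie on the boundary of $\eff(\overline M_\oa)$ and give fibre-type contractions. These come from the faces $b_i=0$ of the linearization domain (forgetful maps), and no wall is crossed. For example, take $\ob=(\frac23-\delta,\frac23-\delta,\frac23-\delta,\frac{3\delta}{2},\frac{3\delta}{2})$ with $0<\delta\ll1$. Then $\Sigma_\ob\cong\pp^1\times\pp^1$: the three heavy points sit at $0,1,\infty$ and the two light points move freely. This is a small Hassett space whose two extremal contractions are the forgetful maps to $\pp^1$.

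Second, when the birational contraction $f\colon X\dashrightarrow\overline M_\oa$ contracts divisors, $\nef(\overline M_\oa)$ is not the closure of a VGIT chamber. It is only the face $f^*\nef(\overline M_\oa)$ of the closed Mori chamber; for instance $\pp^{n-3}$ has Picard rank one while its chamber is full-dimensional. So the facets cannot be read off from the walls.

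Your description of a wall crossing is correct: the exceptional locus is a $\pp^{|I^c|-2}$ of configurations with $I$ collided, and its lines include strata. But the argument only closes after a complete classification of extremal contractions, fibre-type ones included. For forgetful maps one can take the fibre over a $0$-dimensional stratum of the base.

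The paper avoids this classification altogether. Once (2) is known, it applies the Keel--McKernan lemma to the boundary: if the components of a reduced divisor $D$ span $\eff(X)$, then $\overline{NE}(X)$ is the closed cone spanned by curves in $D$. Hence $\overline{NE}(\overline M_\oa)$ is generated by curves in the divisors $\delta_{\{i,j\}}$. Each of these is a small Hassett space with $n-1$ markings (Lemma \ref{lemmasmall}.(\ref{lm3})), and its 1-dimensional strata are strata of $\overline M_\oa$. Induction on $n$ then concludes, and finiteness is automatic.

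Smaller points on Steps 1--2:
\begin{itemize}
\item The identification of Mori chambers with VGIT chambers, which is the real content of (6), is only announced in your plan. The paper computes $\phi\colon\pic(X)_\qq\to\pic((\pp^1)^n)_\qq$, $H\mapsto(n-3)L_n+\sum_{i<n}L_i$, $E_i\mapsto L_i+L_n$. It then uses Kempf descent to get $\proj R(X,D)\cong\Sigma_{\Phi(D)}$ for every big $D$.
\item Under this map, for $\sum b_i=2$ the class $\sum b_iL_i$ corresponds to $(1-b_n)H-\sum_{i<n}(1-b_i-b_n)E_i$, which differs from your formula.
\item This matches VGIT chambers with \emph{all} Mori chambers of $\eff(X)$, not only those in $\mov(X)$.
\item To pass from a chamber to a small Hassett space you also need the converse of Step 1: every typical $\Sigma_\ob$ is $\overline M_{\ob_\ve}$ with $\ob_\ve=\ob+\ve(1,\dots,1)$.
\item The tuple $(1,\ve,\dots,\ve)$ is admissible only for $\ve>\frac1{n-1}$, and it gives $\pp^{n-3}$ for $\ve\le\frac1{n-2}$. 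Take $\ve=\frac1{n-3}$ instead.
\end{itemize}
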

This reproves and extends a result presented by Bolognesi and Massarenti in \cite{Bolognesi_Massarenti}: the Mori cone of the GIT quotients $(\pp^1)^n\sslash PGL_2$ with the symmetric linearizations is generated by 1-dimensional strata. In particular, when $n$ is odd, the space $(\pp^1)^n\sslash PGL_2$ is isomorphic to $\overline{M}_\oa$ where $\oa=(2/n+\ve,\dots,2/n+\ve)$, $0<\ve\ll1$. When $n$ is even, $(\pp^1)^n\sslash PGL_2$ is singular, its Picard rank is one, and $\overline{M}_\oa$ is its Kirwan desingularization, but it is no longer small. In all cases, $\overline{M}_\oa$ is an instance of the $\overline{M}_{p,q}$ spaces that appeared in \cite{CT3}, i.e.  the Hassett spaces with $a_1=\dots=a_p=a+\ve$, $a_{p+1}=\dots=a_n=b$, $p+q=n$, $pa+qb=2$, $0<b,\ve\ll1$. These $\overline{M}_{p,q}$ spaces are small as long as at least one among $p$ and $q$ is odd. When the $\overline{M}_{p,q}$ spaces are not small, they fit in the next easiest class of Hassett spaces to study, that is, when the stable curves have at most two irreducible components. We leave open the question of the generators for the Mori cones for these spaces.

In \cite{Bolognesi_Massarenti}, the Mori chamber decomposition of $Bl_{n-1}\pp^{n-3}$ and its link with the GIT chamber decomposition for the action of $PGL_2$ on $(\pp^1)^n$ were also studied. In Section \ref{sectiongit}, we revise, with a general setting, how small Hassett spaces arise as quotients $(\pp^1)^n\sslash PGL_2$ for certain linearizations. In Section \ref{sectionbirational}, we reprove the Mori chamber decomposition of $Bl_{n-1}\pp^{n-3}$ and use it to prove Theorem \ref{maincor}.(\ref{th126}).

\subsection*{Conventions} We work over an arbitrary algebraically closed field unless the complex field is specified.

\subsubsection*{Acknowledgments}
The author is grateful to his supervisors, Ana-Maria Castravet and Enrica Floris, for their support and guidance throughout the development of this project. The author thanks Pietro Capozzolo for attempting a joint computational approach to more challenging cases, even though the numerical results were inconclusive.
The author has been co-funded by the European Union’s Horizon Europe research and innovation program under the Marie Skłodowska-Curie Grant Agreement No 101126554.

\section{Background on Hassett spaces}\label{background}

In this section, we recall some background notions on Hassett spaces from \cite{hassett2002modulispacesweightedpointed}, and we introduce some straightforward properties of small Hassett spaces (cf. Definition \ref{Hasspace}) in Lemma \ref{lemmasmall} with some examples.
\begin{rmk}
    Fix two weight tuples $\oa=(a_1,\dots,a_n)$ and $\ob=(b_1,\dots,b_n)$ as in Definition \ref{Hasspace}. If $a_i\ge b_i$ for each $i$, by \cite[Theorem 4.1]{hassett2002modulispacesweightedpointed} there is a \textit{reduction morphism} $\rho=\rho_{\ob,\oa}:\overline M_{\oa}\to{\overline M}_{\ob}$, which is birational and isomorphism on $ M_{0,n}$.
\end{rmk}
\begin{definition}\label{boundariesdef}
Given a subset $I\subset\{1,\dots,n\}$ such that $2\le|I|\le n-2$, the image $\delta_{I,I^c}$ of the gluing morphism $\overline M_{0,I\cup\{*\}}\times\overline M_{0,I^c\cup\{\star\}}\to \overline M_{0,n}$ is an irreducible component of $\overline{M}_{0,n}\setminus M_{0,n}$, which we denote by $\delta_I$ for simplicity. These components are called \textit{$1$-codimensional strata}. Similarly, \textit{$k$-codimensional strata} are defined as the images of $k$ consecutive gluings.

Strata in $\overline{M}_{\oa}$ are defined as images of those in $\overline M_{0,n}$ via the reduction morphism. Precisely, \textit{$k$-codimensional strata} in $\overline M_\oa$ are the images via $\overline M_{0,n}\to \overline M_{\oa}$ of strata (of any dimension) in $\overline{M}_{0,n}$ that have codimension $k$ in $\overline{M}_\oa$. A 1-codimensional stratum in $\overline{M}_\oa$ is determined by a subset $I\subset\{1,\dots,n\}$ such that $2\le|I|\le n-2$ is again denoted $\delta_I$.
\end{definition}
\begin{rmk}\label{divisors}
The dimension of the strata might drop after a reduction morphism $\rho:\overline M_{\oa}\to\overline{M}_\ob$. It is proved in \cite[Proposition 4.5]{hassett2002modulispacesweightedpointed} that every time $\sum_{i\in I}a_i>1$ and $\sum_{i\in I^c}a_i>1$, the divisor $\delta_I$ in $\overline M_{\oa}$ is contracted by the reduction morphism $\rho:\overline M_{\oa}\to\overline{M}_\ob$ if and only if $|I|>2$ and $\sum_{i\in I}b_i\le1$, or if the same happens to $I^c$. 

We list the possible ways in which $1$-codimensional strata can appear. Given $I$, the possibilities for $\delta_I$ are:
\begin{enumerate}
    \item\label{type1} If $\sum_{i\in I}a_i>1$ and $\sum_{i\in I^c}a_i>1$, the stratum $\delta_I$ is a divisor and is the image of the gluing
\[
\overline M_{\oa_I}\times \overline M_{\oa_{I^c}}\to \overline M_{\oa},
\]
with $\oa_I=(a_{i_1},\dots,a_{i_r},1)$, $I=\{i_1,\dots,i_r\}$,  $\oa_{I^c}=(a_{j_1},\dots,a_{j_{n-r}},1)$, $I^c=\{j_1,\dots,j_{n-r}\}$. In this case, its points correspond to curves with two irreducible components that intersect transversely at a point, one with points marked by $I$ and the other by $I^c$.
\item\label{type2} If $\sum_{i\in I}a_i\le1$ and $|I|=2$, the stratum $\delta_I$ is again a divisor. If $I=\{i_1,i_2\}$, its points correspond to irreducible curves where the marked points $p_{i_1}$ and $p_{i_2}$ are identified. This time, $\delta_I$ is isomorphic to the Hassett space $\overline{M}_{\oa'}$ where $\oa'=(a_1,\dots,\hat a_{i_1},\dots,\hat a_{i_2},\dots,a_n,a_{i_1}+a_{i_2})$, with $a_{i_1}$ and $a_{i_2}$ omitted.
\item In the case where $\sum_{i\in I}a_i\le1$ and $|I|>2$, we have $\delta_I$ is a $(|I|-1)$-codimensional stratum and not a divisor. 
\end{enumerate}
\end{rmk}
\begin{definition}\label{divisortype}
    Let $\overline{M}_\oa$ be a Hassett space. We say that a divisor $\delta_I$ is \begin{enumerate}
        \item of \textit{type-1} if it as in point \ref{type1} of Remark \ref{divisors} above, and
        \item of \textit{type-2} if it is as in point \ref{type2} of Remark \ref{divisors} above.
    \end{enumerate}
\end{definition}
\begin{notation}
Let $S\subset\qq^n$ be a connected topological subspace. A \textit{chamber decomposition} of $S$ consists of a finite set $\{H_1,\dots,H_r\}$ of hyperplanes of $\qq^n$, called \textit{walls}. The connected components of $S\setminus\bigcup H_i$ are called the \textit{chambers} of the decomposition.
\end{notation}
\begin{definition}[{{{\cite[Chapter 5]{hassett2002modulispacesweightedpointed}}}}]\label{domainD}
Denote the set whose points correspond to weight tuples that are admissible for Definition \ref{Hasspace} by \[
\mathcal{D}_{0,n}=\{(a_1,\dots,a_n)\in\qq^n\:|\:0<a_i\le1\text{ and }\sum a_i>2 \}.
\]
As a topological space, $\mathcal{D}_{0,n}$ has two natural chamber decompositions:
\begin{enumerate}
    \item the \textit{fine} chamber decomposition, whose walls are the hyperplanes $\sum_{i\in I}a_i=1$ for $I\subset\{1,\dots,n\}$ and $2\le|I|\le n-2$, and
    \item the \textit{coarse} chamber decomposition, whose walls are the hyperplanes $\sum_{i\in I}a_i=1$ for $I\subset\{1,\dots,n\}$ and $2 <|I|\le n-2$.
\end{enumerate}
\end{definition}
As proved in \cite[Proposition 5.1]{hassett2002modulispacesweightedpointed}, the variation of the weight tuple $\oa$ in the coarse chamber decomposition describes the variation of the moduli space $\overline{M}_\oa$. In contrast, the fine chamber decomposition describes the variation of the moduli functor, i.e., of the universal family $\ou\to\overline{M}_\oa$. A reduction morphism $\rho:\overline M_{\oa}\to\overline{M}_\ob$ such that $\overline M_{\oa}$ and $\overline M_{\ob}$ are in the same coarse chamber is an isomorphism. The only thing that can happen is that some type-1 divisors become type-2 divisors.

\begin{example}[Kapranov's construction]\label{exom}
This example reports some realizations of Hassett spaces as standard varieties. Among these, we find $Bl_{n-1}\pp^{n-3}$, that is, the blow-up of $\pp^{n-3}$ at $n-1$ general points, which will be central in Section \ref{sectionbirational} and Section \ref{Lastsec}. A more detailed discussion can be found in \cite[Construction 6.1]{hassett2002modulispacesweightedpointed}.

Fix $n\ge4$ and define the $n$-tuples $\oa_k=(\frac{1}{n-2-k},\dots,\frac{1}{n-2-k},1)$ for $k=0,\dots,n-4$. The Hassett space $\overline M_{\oa_0}$ is isomorphic to $\pp^{n-3}$, and, for $k\ge1$, $\overline M_{\oa_k}$ is isomorphic to the iterated blow-up of $\pp^{n-3}$ at $n-1$ points in general position, at all the proper transforms of the lines spanned by pairs of points, and so until we blow-up all the proper transforms of the $(k-1)$-dimensional linear subspaces spanned by $k$ of the points. Moreover, the reduction morphism $\rho_{\oa_{k-1},\oa_k}:\overline M_{\oa_{k}}\to\overline M_{\oa_{k-1}}$ is isomorphic to the blow-up morphism of the proper transforms of the $(k-1)$-dimensional linear subspaces. 

We sketch how the isomorphism between $Bl_{n-1}\pp^{n-3}\to\pp^{n-3}$ and $\overline{M}_{\oa_1}\to\overline M_{\oa_0}$ can be set theoretically written down. 
Any points of $\overline M_{\oa_0}$ can be represented as $(\pp^1,x_1,\dots,x_{n-2},0,\infty)$. This gives a bijection between $\overline M_{\oa_0}$ and $\pp^{n-3}$. Now, set the $n-1$ general points of $\pp^{n-3}$ as $P_1=[1:0:\dots:0] ,\dots,P_{n-2}=[0:\dots:0:1]$, and $P_{n-1}=[1:\dots:1]$. Under the bijection above, they are mapped to:
    \begin{align}
        &P_i\textup{ for }i=1,\dots,n-2&\longleftrightarrow &\quad0=x_1=\dots=\hat x_i=\dots=x_{n-2},\\
        &P_{n-1}&\longleftrightarrow&\quad x_1=\dots=x_{n-2}.
    \end{align}
We consider the blow-up $Bl_{n-1}\pp^{n-3}$ of $\pp^{n-3}$ at these points, and we denote by $E_i$ the exceptional divisor at $P_i$ and $\widetilde\Lambda_{ij}$ the strict transform of the hyperplane passing through all points except $P_i$ and $P_j$. It is immediate to see that the boundary divisor $\delta_{\{i,n\}}$ is of type and 1 corresponds to $E_i$, and $\delta_{\{i,j\}}$ is of type 2 and corresponds to $\widetilde\Lambda_{ij}$.     
\end{example}

Recall from Definition \ref{Hasspace} that we defined small Hassett spaces as those whose universal family is a $\pp^1$-bundle. The following lemma contains conditions that are useful for understanding these spaces from the weight tuple.
\begin{lemma}\label{lemmasmall}
    Let $\overline{M}_\oa$ be a Hassett space.
    \begin{enumerate}
        \item\label{lm4} The following are equivalent:\begin{enumerate}
            \item $\overline{M}_\oa$ is small, 
            \item $\overline{M}_\oa$ has only type-2 divisors, and
            \item there is no subset $I\subset\{1,\dots,n\}$ with $\sum_{i\in I}a_i>1$ and $\sum_{i\in I^c}a_i>1$.
        \end{enumerate} 
        \item\label{lm1} There exists a reduction morphism $\overline{M}_\oa\to\overline{M}_\ob$ with $\overline{M}_\ob$ that is small.
        \item\label{lm2} $\overline{M}_\oa$ is isomorphic to a small Hassett space if and only if there is no subset $I\subset\{1,\dots,n\}$ such that $2<|I|<n-2$, $\sum_{i\in I}a_i>1$ and $\sum_{i\in I^c}a_i>1$.
        \item\label{lm3} If $\overline{M}_\oa$ is small, then every stratum of $\overline{M}_\oa$ is isomorphic to a small Hassett space.
    \end{enumerate}
\end{lemma}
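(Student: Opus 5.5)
For part (\ref{lm4}) the plan is to go around the cycle (a)$\Rightarrow$(c)$\Rightarrow$(b)$\Rightarrow$(a), using the description of the universal family $\ou\to\overline M_\oa$. Suppose $\overline M_\oa$ is small, so every fiber of $\ou$ is a smooth $\pp^1$. If some $I$ had $\sum_{i\in I}a_i>1$ and $\sum_{i\in I^c}a_i>1$, then the two-component curve obtained by gluing $\pp^1$'s carrying the points of $I$ and of $I^c$ would be $\oa$-stable: each component would have positive degree of $\omega_C(\sum a_ip_i)$, and no coincidences would be forced. Such a curve is a reducible fiber, which is a contradiction; this gives (a)$\Rightarrow$(c). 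For (c)$\Rightarrow$(b), Remark \ref{divisors} says type-1 divisors arise exactly from such $I$, so (c) forbids them. For (b)$\Rightarrow$(a), note that any reducible $\oa$-stable curve has a node separating the markings into $I\sqcup I^c$. Stability of the two sides, after contracting the rest, forces $\sum_I a_i>1$ and $\sum_{I^c}a_i>1$, so the corresponding $\delta_I$ would be a type-1 divisor. Hence all fibers are irreducible of genus $0$, and $\ou\to\overline M_\oa$ is a smooth conic bundle with the sections $p_i$. A conic bundle with a section is a $\pp^1$-bundle.

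For (\ref{lm1}), I would decrease the weights. Starting from $\oa$, choose $\ob\le\oa$ componentwise with $\sum b_i>2$ but admitting no $I$ with both partial sums $>1$. A concrete choice: order the weights so that $a_n$ is maximal and scale all others down until $\sum_{i<n} b_i$ is just above $2-b_n$, while keeping every $\sum_{I}b_i\le 1$ for $I\not\ni n$ with $|I^c|\ge2$. The cleanest version is to take $\ob=t\oa$ with $t$ minimal, $t>2/\sum a_i$. Then $\sum b_i=2+\epsilon$, so two disjoint subsets cannot both exceed $1$ unless one has sum in $(1,1+\epsilon)$. Generically this is avoided by a small perturbation that stays $\le\oa$. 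The reduction morphism then exists by \cite[Theorem 4.1]{hassett2002modulispacesweightedpointed}, and by (c) the target is small.

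For (\ref{lm2}), I would use the coarse chamber decomposition. If no $I$ with $2<|I|<n-2$ has both sums $>1$, then lowering each pair-weight moves us only across walls $\sum_{i\in I}a_i=1$ with $|I|=2$ or $|I^c|=2$. These are fine walls, not coarse ones, so $\overline M_\oa$ is unchanged up to isomorphism, and we reach a $\ob$ satisfying (c). Concretely, decrease the weights within the same coarse chamber until each such pair sum is $\le1$. Conversely, an isomorphism to a small space implies Picard-number and boundary considerations: a set $I$ with $2<|I|<n-2$ and both sums $>1$ gives a type-1 divisor whose image under any reduction to a small space is contracted. The latter follows by Remark \ref{divisors}, since $|I|>2$. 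The main obstacle is making this converse rigorous, because an abstract isomorphism need not be a reduction map. I would argue via the Picard number: for a small space, $\rho(\overline M_\ob)=n-2$, which one can read off from the type-2 description or from Example \ref{exom}. Each extra type-1 divisor with $|I|,|I^c|>2$ raises $\rho(\overline M_\oa)$ beyond this, since the reduction morphism of (\ref{lm1}) contracts it.

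For (\ref{lm3}), strata of a small space are, by Remark \ref{divisors}(\ref{type2}), iterated type-2 divisors $\overline M_{\oa'}$ with merged weights. Merging preserves condition (c): any partition for $\oa'$ pulls back to a partition for $\oa$ with the same sums. The result then follows by induction on codimension.
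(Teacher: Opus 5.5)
Parts (1), (2) and (4) are sound and follow the paper's route. In (1) you supply the degree count on the two sides of a node, which the paper dismisses as ``by definition''. In (2) you should fix the order of choices. First move $\oa$ slightly down so that no $I$ has $\sum_{i\in I}a_i=\frac12\sum_i a_i$. Only then choose $\ve$ below the resulting gap; $t\oa$ is then already small. The genuine problems are in both directions of (3).

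\textbf{The ``if'' direction of (3).} Your claim that lowering the weights of type-1 pairs only crosses fine walls is false. A wall $\sum_{i\in I}a_i=1$ with $|I^c|=2$ has $|I|=n-2>2$, so it is a coarse wall (Definition~\ref{domainD}), and crossing it contracts $\delta_I$ (Remark~\ref{divisors}). Decreasing weights cleverly cannot fix this. Take $\oa=(1,1,1,\ve,\ve)$ (here $n=5$ and the hypothesis is vacuous) or $\oa=(0.6,0.6,0.6,0.15,0.15,0.15)$. In both, the three pairs inside $\{1,2,3\}$ are type-1. A small $\ob\le\oa$ would need, for each of these pairs, either $b_i+b_j\le1$ or $\sum_{k\notin\{i,j\}}b_k\le1$.
\begin{itemize}
\item If $b_i+b_j\le1$ for all three pairs, then $\sum b_i\le\frac32+\sum_{i>3}a_i<2$, which is not allowed.
\item Otherwise $\sum_{k\notin\{i,j\}}b_k\le1$ for some pair, and then $\delta_{\{i,j\}}$ is contracted.
\end{itemize}
So no reduction from $\overline M_\oa$ to a small space is an isomorphism. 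Yet both spaces are isomorphic to small ones. Indeed $\overline M_{(1,1,1,\ve,\ve)}\cong\overline M_{0,5}\cong\overline M_{(\frac25+\ve,\dots,\frac25+\ve)}$, and $(\frac12,\frac12,\frac12,\frac15,\frac15,\frac15)$ is small and lies in the coarse chamber of the second tuple.

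The missing idea is to exhibit a small tuple in the same coarse chamber directly, letting some weights go up. For $n\ge6$ the heavy pairs pairwise intersect. When they share a common index, lowering only that weight works; the triangle configuration above forces some weights up. The paper's halving step has the same defect: it sends $(1,1,1,\ve,\ve)$ to $(\frac12,\frac12,1,\ve,\ve)$, which makes $\{1,4,5\}$ light.

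\textbf{The ``only if'' direction of (3).} Small Hassett spaces do not have Picard number $n-2$. For example, $\pp^{n-3}$ and $Bl_{n-1}\pp^{n-3}$ are both isomorphic to small spaces, with $\rho=1$ and $\rho=n$. Showing that one particular reduction $\overline M_\oa\to\overline M_\ob$ contracts $\delta_I$ only gives $\rho(\overline M_\oa)>\rho(\overline M_\ob)$ for that one $\ob$.

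In fact $\rho$ cannot detect the condition at all. Take $n=6$ and $\oa=(0.66,0.74,0.36,0.24,0.09,0.01)$. The partition $\{1,3,6\}|\{2,4,5\}$ has both sums $>1$. However, $\overline M_{0,6}\to\overline M_\oa$ contracts eleven boundary divisors: the other nine $3|3$ divisors, together with $\delta_{\{1,2\}}$ and $\delta_{\{2,3\}}$. So $\rho(\overline M_\oa)=16-11=5$. This is the same as the Picard number of the small space $\overline M_{(0.93,0.85,0.07,0.07,0.07,0.07)}$.

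The paper's argument is that an isomorphic small space lies in the same coarse chamber. This is justified for an isomorphism extending the identity of $M_{0,n}$, and under that reading the converse is immediate from the wall description. For abstract isomorphisms you would need an invariant finer than the Picard number.
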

\begin{proof}
For (\ref{lm4}), the universal family of a Hassett space is a $\pp^1$-bundle if and only if it has only type-2 divisors by definition. The second equivalence is exactly the condition for not having type-1 divisors, see Remark \ref{divisors}.

    Up to perturbing $\oa$ within the fine chamber of $\oa$, we can assume that there is no subset $I\subset\{1,\dots,n\}$ such that $\sum_{i\in I}a_i=\frac{1}{2}\sum_{i=1}^na_i$. Define $t=\frac{2+\ve}{\sum a_i}$ for $0<\ve\ll1$ and $\ob=(b_i=t\cdot a_i)$. Notice that $a_i\ge b_i$ for all $i$. We have $\sum b_i=t\sum a_i=2+\ve$. At this point, either $\overline{M}_\ob$ is small, and in this case we are done, or there exists $I\subset\{1,\dots,n\}$, such that $\sum_{i\in I}b_i=1+\eta_1^I$ and $\sum_{i\in I^c}b_i=1+\eta_2^I$, for $\eta_1^I+\eta_2^I=\ve$ and $0<\eta_1^I\ne \eta_2^I<\ve$. In the second case, define $\eta=\max_I\{\eta_i^I\}$, and define $\ob'=(b_1-\eta,b_2,\dots,b_n)$. Now, $\overline{M}_{\ob'}$ is small and this proves (\ref{lm1}).

    For (\ref{lm2}), if $\overline{M}_\oa$ is isomorphic to a small Hassett space $\overline{M}_\ob$, then they live in the same coarse chamber and the weight condition is necessary. On the other hand, given $\oa$ as in the hypothesis, a way to find an isomorphism $\overline{M}_\oa\to\overline{M}_\ob$ with $\overline{M}_\ob$ small is the following. If $I=\{i_1,i_2\}$ we have $a_{i_1}+a_{i_2}>1$ and $\sum_{i\in I^c}a_i>1$, define $\oa'=(a_i')$ with \[a_{i_1}'=a_{i_1}-\frac{a_{i_1}+a_{i_2}-1}{2},\qquad a_{i_2}'=a_{i_2}-\frac{a_{i_1}+a_{i_2}-1}{2},\qquad a_i'=a_i\textup{ for }i\in I^c.\]
In this way, we have $a_{i_1}'+a_{i_2}'=1$ and $\sum_{i\in I^c}a'_i>1$. Precisely, as explained in \ref{divisors}, the reduction $\overline M_\oa\to\overline M_{\oa'}$ is an isomorphism, maps the type-1 divisor $\delta_I$ in $\overline M_\oa$ to the corresponding type-2 divisor $\delta_I$ in $\overline M_{\oa'}$, and the number of type-1 divisors in $\overline M_{\oa'}$ has decreased by 1 from $\overline M_{\oa}$. We can iterate this operation starting from $\oa'$ and reach in a finite number of steps a small tuple $\ob$.

    We already mentioned in Remark \ref{divisors} that a type-2 divisor $\delta_{\{i_1,i_2\}}$ is isomorphic to the Hassett spaces $\overline{M}_{\oa'}$ with $\oa'=(a_1,\dots,\hat a_{i_1},\dots,\hat a_{i_2},\dots,a_n,a_{i_1}+a_{i_2})$. It is immediate from (\ref{lm4}) to see that $\overline{M}_{\oa'}$ is small too. Since 2-dimensional strata in $\overline{M}_{\oa}$ can be seen as 1-dimensional strata inside boundary divisors, we prove (\ref{lm3}) by induction on the codimension.
\end{proof}

\begin{example}\label{qusexample}
Using Lemma \ref{lemmasmall}.(\ref{lm2}),(\ref{lm3}), it is easy to determine whether $\overline{M}_\oa$ is small given the tuple $\oa$. For example, when $n\le 5$, all Hassett spaces are isomorphic to small Hassett spaces. For any $n$, the Hassett space $\overline{M}_{\oa_1}\cong Bl_{n-1}\pp^{n-3}$ of Example \ref{exom} is isomorphic to a small Hassett space but is not small itself. Define the tuple $\oa=(a+\ve,\dots,a+\ve,b,\dots,b)$ where $a+\ve$ appears $p\ge3$ times, $b$ appears $q\ge0$ times, at least one between $p$ and $q$ is odd, $ap+bq=2$, and $0<b,\ve\ll1$. Then $\overline M_{p,q}:=\overline{M}_\oa$ is small. This space played a central role in \cite{CT3}. When $p=n$ is odd and $q=0$, Proposition \ref{proptypical} of the next section will prove that $\overline{M}_{p,0}$ is isomorphic to the GIT quotient $(\pp^1)^n\sslash_{\oo(1,\dots,1)}PGL_2$.
\end{example}

\section{Small Hassett spaces and GIT quotients of $(\pp^1)^n$}\label{sectiongit}

Small Hassett spaces can also be interpreted as GIT quotients of $(\pp^1)^n$ by $PGL_2$ acting diagonally, cf. Proposition \ref{proptypical}. All the details about GIT quotients and this particular action can be found in \cite[Chapter 11]{Dolgachev_2003}. These GIT quotients depend on the choice of a linearized ample line bundle on $(\pp^1)^n$, called linearization. The ample line bundles of $(\pp^1)^n$ are of the form $\oo(b_1,\dots,b_n)$, for some positive integers $b_i$, and linearize for the action of $PGL_2$ if and only if $b_1+\dots+b_n$ is even. However, the resulting GIT quotient does not depend on the choice of the linearization up to multiples. Therefore, the data of a linearization for the action of $PGL_2$ is encoded in a tuple of rational numbers $\ob=(b_1,\dots,b_n)$ with $0<b_i\le1$ and $\sum b_i=2$. The semi-stable points in $(\pp^1)^n$ are tuples $(p_1,\dots,p_n)$ such that every time $p_{i_1}=\dots=p_{i_r}$ we have $\sum_{j=1}^rb_{i_j}\le1$, while stable points require the inequality to be strict. 
\begin{notation}\label{vgitdomain}
Let $\ob$ be a linearization for the action of $PGL_2$ on $(\pp^1)^n$. We denote by $\Sigma_\ob=(\pp^1)^n\sslash_\ob PGL_2$ the GIT quotient with respect to the linearization provided by $\ob$. We denote by
\[
\mathcal{V}_n=\{(b_1,\dots,b_n)\in\qq^n\:|\:0<b_i\le1\text{ and }\sum b_i=2 \}.\]
the topological space whose points correspond to the possible linearizations $\ob$. We will call \textit{VGIT chamber decomposition} of $\mathcal{V}_n$ the chamber decomposition that has the hyperplanes $\sum_{i\in I}b_i=1$ for $I\subset\{1,\dots,n\}$ as walls.

Let $\mathrm{Amp}((\pp^1)^n))$ be the cone spanned by ample divisors in $\pic((\pp^1)^n)_\qq$.
We define a \textit{normalization} map $N:\mathrm{Amp}((\pp^1)^n)\to\mathcal{V}_n$ that associates the ample line bundle given by the tuple $\oa=(a_i)$ with the linearization $N(\oa)=(\frac{2a_i}{\sum a_j})$.
\end{notation} 
\begin{definition}[{{\cite[Section 8]{hassett2002modulispacesweightedpointed}}}]
    Let $\ob$ be a linearization for the action of $PGL_2$ on $(\pp^1)^n$. If stability and semi-stability coincide, i.e., if there is no subset $I\subset\{1,\dots,n\}$ such that $\sum_{i\in I}b_i=1$, the linearization $\ob$ is called \textit{typical}. 
\end{definition}
The chambers of the VGIT decomposition of $\mathcal{V}_n$ represent the regions of $\mathcal{V}_n$ where the stability does not change: if $\ob$ and $\ob'$ are typical, the quotients $\Sigma_\ob$ and $\Sigma_{\ob'}$ are isomorphic if and only if $\ob$ and $\ob'$ lie in the same chamber. This is treated in details in \cite{Dolgachev_2003}. In this setting, typical linearizations are those that lie in a VGIT chamber. Inside $\qq^n$, the space $\mathcal{V}_n$ consists of the face $\sum a_i=2$ of the closure of the domain $\mathcal{D}_{0,n}$ from Definition \ref{domainD}. The hyperplanes of the fine and the coarse chamber decompositions of $\mathcal{D}_{0,n}$ induce the same chamber decomposition on $\mathcal{V}_n$.

The next goal is to stress the link between Hassett spaces with small weights and the GIT quotients of $(\pp^1)^n$ arising from typical linearizations. Specifically, we combine \cite[Theorem 8.2]{hassett2002modulispacesweightedpointed} and \cite[Lemma 2.7]{CT3}.

\begin{definition}
    Given $\oa\in\mathcal{D}_{0,n}$ and $\ob\in\mathcal{V}_n$, we say that \textit{$\oa$-stability and $\ob$-stability are the same} to mean that $\sum_{i\in I} a_i\le 1$ if and only if $\sum_{i\in I} b_i\le 1$ for any $I\subset\{1,\dots,n\}$.
\end{definition}

\begin{proposition}\label{proptypical}
Let $\ob\in\mathcal{V}_n$ be a typical linearization, and let $\oa\in\mathcal{D}_{0,n}$.
\begin{enumerate}
    \item\label{proptyp1} 
    If $\oa$-stability and $\ob$-stability are the same, there is a natural isomorphism $\Sigma_\ob\xrightarrow{\sim}\overline{M}_\oa$.
    \item\label{proptyp2} There exists a small Hassett space $\overline{M}_{\ob_\ve}$ isomorphic to $\Sigma_\ob$.
    \item\label{proptyp3} If $\overline{M}_\oa$ and $\Sigma_\ob$ are isomorphic, then $\ob$ lies in the closure of the coarse chamber of $\oa$ in $\mathcal{D}_{0,n}$.
    \item\label{proptyp4} There is a bijection between the coarse chambers of $\mathcal{D}_{0,n}$ that contain small Hassett spaces and the chambers of the VGIT decomposition for the action of $PGL_2$ on $(\pp^1)^n$. Corresponding Hassett spaces and GIT quotients are isomorphic.
\end{enumerate}
\end{proposition}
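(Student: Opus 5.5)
We prove the four points in order. The main input for (\ref{proptyp1}) is \cite[Theorem 8.2]{hassett2002modulispacesweightedpointed}, which treats Hassett weights near the face $\sum a_i=2$. Combining it with the natural map we get the isomorphism.

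\emph{Point (\ref{proptyp1}).} Let $U\subset(\pp^1)^n$ be the stable locus for $\ob$. Since $\ob$ is typical, $U$ equals the semi-stable locus and $PGL_2$ acts freely on it. Over $U$ the trivial family $U\times\pp^1\to U$ with its $n$ tautological sections is a family of $\oa$-stable curves. Indeed, the curve is irreducible, so ampleness of $\omega(\sum a_ip_i)$ amounts to $\sum a_i>2$. Moreover, coincidences $p_{i_1}=\dots=p_{i_r}$ occur only when $\sum b_{i_j}<1$, hence when $\sum a_{i_j}\le1$, because the two stabilities agree. This gives a $PGL_2$-invariant morphism $U\to\overline M_\oa$, which descends to $\Sigma_\ob\to\overline M_\oa$.

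For the inverse, first note that no $\oa$-stable curve is reducible. A reducible curve would give a partition $I\sqcup I^c$ with $\sum_I a_i>1$ and $\sum_{I^c}a_i>1$. Then $\sum_I b_i>1$ and $\sum_{I^c} b_i>1$, contradicting $\sum b_i=2$. So the universal family is a $\pp^1$-bundle with $n$ sections. Locally on $\overline M_\oa$ we trivialize it and obtain maps to $U$, well defined up to $PGL_2$; these glue to $\overline M_\oa\to U/PGL_2=\Sigma_\ob$. The two maps are mutually inverse on the dense open $M_{0,n}$ and both spaces are separated, so they are inverse isomorphisms. This is exactly the content of \cite[Lemma 2.7]{CT3}, which we may cite.

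\emph{Point (\ref{proptyp2}).} Take $\ob_\ve=(1+\ve)\ob$ with $0<\ve\ll1$. Since $\ob$ is typical, every sum $\sum_I b_i$ differs from $1$. For small $\ve$, the sum $\sum_I b_i(1+\ve)\le1$ exactly when $\sum_I b_i<1$, so $\ob_\ve$-stability and $\ob$-stability coincide. Also $\sum (b_\ve)_i=2+2\ve>2$. By the argument above no subset $I$ has both $\sum_I$ and $\sum_{I^c}$ larger than $1$, so $\overline M_{\ob_\ve}$ is small by Lemma \ref{lemmasmall}(\ref{lm4}). Point (\ref{proptyp1}) then gives the isomorphism.

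\emph{Point (\ref{proptyp3}).} By (\ref{proptyp2}), $\overline M_\oa\cong\overline M_{\ob_\ve}$. We expect this to be the main obstacle: we must show that abstractly isomorphic Hassett spaces lie in the same coarse chamber, and not merely that a reduction morphism is an isomorphism. I would first try to argue compatibly with the natural maps from $\overline M_{0,n}$, so that the isomorphism respects the boundary and hence the set of contracted strata. By Remark \ref{divisors}, that set of contracted strata determines the coarse chamber, via the conditions $\sum_I a_i\le 1$ with $|I|>2$.

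If this works, $\oa$ and $\ob_\ve$ lie in the same coarse chamber. Since $\ob_\ve\to\ob$ as $\ve\to0$, $\ob$ lies in its closure. If the isomorphism is not assumed natural, I would instead distinguish chambers by invariants: the Picard rank and the configuration of boundary divisors.

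\emph{Point (\ref{proptyp4}).} Send a VGIT chamber containing $\ob$ to the coarse chamber of $\ob_\ve$. This is well defined because walls $\sum_I b_i=1$ with $|I|>2$ on $\mathcal V_n$ are traces of coarse walls, and walls with $|I|=2$ are complements of those with $|I^c|=n-2$. Surjectivity onto the chambers containing small spaces follows from Lemma \ref{lemmasmall}(\ref{lm1}): a small $\oa$ rescales to a typical $\ob$ as in that proof, with the same stability. Injectivity follows from (\ref{proptyp3}) together with the fact that distinct VGIT chambers give distinct stability conditions. The isomorphism statement is (\ref{proptyp2}).
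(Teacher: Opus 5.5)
Your parts (1) and (2) are fine. In (1) you argue differently from the paper. The paper shows that $\Sigma_\ob\to\overline M_\oa$ is bijective and birational and concludes from smoothness of $\overline M_\oa$ (Zariski's Main Theorem). You instead construct the inverse from local trivialisations of the universal $\pp^1$-bundle and conclude by separatedness and density of $M_{0,n}$. Your route avoids Zariski's Main Theorem and proves the irreducibility of $\oa$-stable curves, which the paper's ``bijective'' tacitly uses; the paper's route avoids gluing classifying maps. Scaling by $1+\ve$ in (2) instead of adding $\ve$ is harmless, since typicality with $|I|=1$ gives $b_i<1$.

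The gap is (3), which you leave conditional, and neither branch of your plan works as stated.

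Your fallback for non-natural isomorphisms cannot succeed, because (3) is false for abstract isomorphisms. Relabelling the markings identifies $\overline M_\oa$ with $\overline M_{\sigma(\oa)}$, so every intrinsic invariant (Picard rank, configuration of boundary divisors) agrees on the two. Concretely, take $n=6$, $\alpha=0.26$, $\beta=0.47$, $\oa=(\alpha,\alpha,\alpha,\beta,\beta,\beta)$ and $\oa'=(\beta,\beta,\beta,\alpha,\alpha,\alpha)$. Both tuples are small, and they are isomorphic via $i\leftrightarrow i+3$, so $\Sigma_{N(\oa)}\cong\overline M_{\oa'}$. But $N(\oa)$ has $b_1+b_2+b_3<1$, so it is not in the closure of the chamber of $\oa'$, where $a_1+a_2+a_3>1$. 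Thus (3) must be read for isomorphisms restricting to the identity on $M_{0,n}$. This is also the only reading under which the paper's one-line justification (any Hassett space isomorphic to $\Sigma_\ob$ comes from the coarse chamber of $\ob_\ve$) holds.

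Even in that reading, your key sentence fails: the set of contracted boundary divisors does not determine the coarse chamber. When $|I|,|I^c|\ge3$, it does not record whether the points of $I$ or those of $I^c$ collide. In the example above, $\overline M_{0,6}\to\overline M_\oa$ and $\overline M_{0,6}\to\overline M_{\oa'}$ contract exactly the same divisors, namely the ten $\delta_I$ with $|I|=3$.

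What works is to compare fibres. Write $\rho_\oa:\overline M_{0,n}\to\overline M_\oa$ for the reduction morphism. A natural isomorphism $\theta:\overline M_\oa\to\overline M_{\ob_\ve}$ satisfies $\theta\circ\rho_\oa=\rho_{\ob_\ve}$, since both agree on the dense open $M_{0,n}$ and the target is separated. So the two reduction morphisms have the same fibres. For $|I|\ge3$, the restriction of $\rho_\oa$ to $\delta_I\cong\overline M_{0,I\cup\{*\}}\times\overline M_{0,I^c\cup\{\star\}}$ factors through the second projection if and only if $\sum_{i\in I}a_i\le1$. Hence $\oa$ and $\ob_\ve$ lie on the same side of every coarse wall, and $\ob=\lim_{\ve\to0}\ob_\ve$ lies in the closure of that chamber.

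In (4), injectivity does not need (3). Two distinct VGIT chambers are separated by some wall $\sum_{i\in I}b_i=1$. After replacing $I$ by $I^c$ when $|I|=2$, this is a coarse wall, and $\ob_\ve$ lies on the same side of it as $\ob$. This uses $n-2>2$. For $n=4$ there are no coarse walls, so the bijection in (4) fails as stated; the paper overlooks this too.
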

\begin{proof}
Let $Y=(\pp^1)^n$. The trivial $\pp^1$-bundle $Y\times\pp^1\to Y$ has $n$ sections $s_1,\dots,s_n$ determined by $s_i(p_1,\dots,p_n)=((p_1,\dots,p_n),p_i)$. This bundle restricts to the semi-stable locus $Y^{ss}$ for the $PGL_2$-action, and the fibers here are $\oa$-stable curves. If $\oa$-stability and $\ob$-stability are the same, by the universal property of $\overline M_\oa$, there is a morphism $Y^{ss}\to\overline M_\oa$. Notice that $\overline{M}_\oa$ has a trivial $PGL_2$-action as its points parametrize isomorphism classes of $\oa$-stable curves. The morphism produced above is therefore a $PGL_2$-equivariant morphism and factors as $Y^{ss}\to\Sigma_\ob\to\overline M_\oa$. Since both $\Sigma_\ob$ and $\overline M_\oa$ parametrize curves with the same stability condition, the morphism between them is bijective. Moreover, it is birational as it is an isomorphism when restricted to $M_{0,n}$. Therefore, since the target is smooth, it is an isomorphism and (\ref{proptyp1}) is proved.

Given $\ob=(b_1,\dots,b_n)$, for a positive and sufficiently small $\ve$, since $\ob$ is typical, the tuple \[
    \ob_\ve=(b_1+\ve,\dots,b_n+\ve)
\]
is small and $\ob_\ve$-stability and $\ob$-stability are the same. Hence, by (\ref{proptyp1}) there is an isomorphism $\overline{M}_{\ob_\ve}\xrightarrow{\sim}\Sigma_\ob$. This proves (\ref{proptyp2}). Moreover, $\ob$ lies in the closure of the coarse chamber of $\ob_\ve$ and any Hassett space isomorphic to $\Sigma_\ob$ must be constructed with a tuple in the same coarse chamber of ${\ob_\ve}$ in $\mathcal{D}_{0,n}$. This proves (\ref{proptyp3}).

The only thing remaining to prove (\ref{proptyp4}) is to associate a typical linearization with a small Hassett space. Let $\overline{M}_{\oa}$ be small. Define $\ob_\oa=N(\oa)$, i.e., as the linearization obtained by setting $b_j=\frac{2a_j}{\sum a_i}$ (see Notation \ref{vgitdomain}). Notice that $\sum_{i\in I} b_i\ge1$ implies $\sum_{i\in I} a_i>1$. This implies that $\ob_\oa$ is typical and $\oa$-stability and $\ob_\oa$-stability are the same, and $\overline{M}_\oa$ and $\Sigma_{\ob_\oa}$ are isomorphic.
\end{proof}

\section{Small Hassett spaces as birational contractions of {{{$Bl_{n-1}\pp^{n-3}$}}}}\label{sectionbirational}
\subsection{Main results of the section}

The purpose of this section is to characterize small Hassett spaces as birational contractions of $Bl_{n-1}\pp^{n-3}$ associated with Mori chambers. From this, we deduce that small Hassett spaces are Mori dream spaces and, over the complex numbers, of Fano type. This highlights the simplicity of these spaces in contrast with $\overline{M}_{0,n}$. As a further consequence, our characterization shows that the birational contractions of $Bl_{n-1}\pp^{n-3}$ with $\qq$-factorial image are smooth; this result was proved for the small $\qq$-factorial modifications in \cite[Theorem 2.32]{Bolognesi_Massarenti}. The main results of the section are summarized in Proposition \ref{proposition}. The VGIT theory of $(\pp^1)^n$ with the action of $PGL_2$ and the Mori chamber decomposition of $Bl_{n-1}\pp^{n-3}$ play a central role in the proof.

The references for the following definitions are \cite{HK} and \cite{okawa}. 
\begin{definition}
    Let $f:X\dashrightarrow Y$ be a birational map between normal projective varieties, and let $X\xleftarrow{q}Z\xrightarrow{p}Y$ be a resolution of indeterminacies where $Z$ is projective and smooth. We say that $f$ is
    \begin{enumerate}
        \item a \textit{birational contraction}, if every $q$-exceptional divisor is also $p$-exceptional, and
        \item a \textit{small $\qq$-factorial modification}, or \textit{SQM} for short, if $Y$ is $\qq$-factorial and $p$ and $q$ contract the same divisors.
    \end{enumerate}
\end{definition}
    Let $f:X\dashrightarrow Y$ be a birational map between $\qq$-factorial projective varieties. Requiring $f$ to be a birational contraction is equivalent to $f$ being surjective in codimension 1. Requiring it to be an SQM is equivalent to it being an isomorphism in codimension 1.

\begin{definition}
    A $\qq$-factorial projective variety $X$ is called \textit{Mori dream space} if 
    \begin{enumerate}
        \item\label{mds1} $H^1(X,\oo_X)=0$,
        \item\label{mds2} $\nef(X)$ is the convex hull of finitely many semi-ample line bundles, and
        \item There is a finite collection of SQMs $f_i: X\dashrightarrow X_i$ such that each $X_i$ satisfies \ref{mds1} and \ref{mds2}, and $\mov(X)$ is the union of the $f_i^*(\nef(X_i))$.
    \end{enumerate}
\end{definition}
Recall that $\mov(X)$ is the cone spanned by the classes of movable divisors in $N^1(X)_\qq$, i.e., with a stable base locus of codimension at least 2, and that $\eff(X)$ is the closure of the cone spanned by the classes of effective divisors in $N^1(X)_\qq$.
\begin{proposition}[\cite{HK}, Proposition 1.11]\label{MDSprop}
    Let $X$ be a Mori dream space. Then,
    \begin{enumerate}
        \item the effective cone $\eff(X)$ is rational polyhedral,
        \item there is a decomposition of $\eff(X)$ into finitely many closed convex chambers, whose interiors are disjoint and called the Mori chambers of $X$,
        \item the Mori chambers of $\eff(X)$ are in 1-1 correspondence with the birational contractions of $X$ with $\qq$-factorial image,
        \item the birational contraction of $X$ corresponding to a Mori chamber $C$ is the map $\psi_{mD}:X\dashrightarrow\proj R(X,mD)$ for every $D$ in $C$ and $m\gg0$, and
        \item the targets of the birational contractions of $X$ are Mori dream spaces.
    \end{enumerate} 
\end{proposition}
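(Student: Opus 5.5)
Since the statement is \cite[Proposition 1.11]{HK}, I would follow the strategy of Hu and Keel: first reduce everything to the movable cone, which the definition of Mori dream space controls directly, and then extend to $\eff(X)$ by splitting off fixed divisorial parts. The key steps, in order, are the following.

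\textbf{Step 1: the pieces $f_i^*(\nef(X_i))$.} By definition, $\mov(X)=\bigcup_i f_i^*(\nef(X_i))$ for finitely many SQMs $f_i$. Each $\nef(X_i)$ is the convex hull of finitely many semi-ample classes, hence rational polyhedral. Since $f_i$ is an isomorphism in codimension one, $f_i^*$ is a linear isomorphism $N^1(X_i)_\qq\to N^1(X)_\qq$ preserving sections of every divisor. So $\mov(X)$ is a finite union of rational polyhedral cones, and for $D\in f_i^*(\nef(X_i))$ the section ring $R(X,D)\cong R(X_i,f_{i*}D)$ is finitely generated, because $f_{i*}D$ is semi-ample. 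I would also check, via the negativity lemma applied to a common resolution, that the interiors of distinct pieces are disjoint: a divisor ample on both $X_i$ and $X_j$ forces $X_i\cong X_j$ compatibly with $X$.

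\textbf{Step 2: fixed parts of effective divisors.} For an effective $\qq$-divisor $D$, write $D=M+F$, where $F$ is the divisorial part of the stable base locus and $M$ is movable. The claim to prove is that only finitely many prime divisors $E_1,\dots,E_r$ can ever occur in some $F$. To see this, note that $M$ lies in some $f_i^*(\nef(X_i))$. Each component $E$ of $F$ satisfies $R(X,D)=R(X,D-tE)$ for small $t$, so it is contracted by the map $X_i\to\proj R(X_i,f_{i*}M)$ onto its image. I would argue that $E$ is then a divisor spanning an extremal ray of $\eff(X)$ that is negative on a contraction of some face of some $\nef(X_i)$. Since there are finitely many $X_i$, each with finitely many faces, only finitely many such divisors arise. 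It follows that
\[
\eff(X)=\mov(X)+\textstyle\sum_j \qq_{\ge0}E_j
\]
is rational polyhedral, which gives (1). Closedness is automatic here, since a finite sum of rational polyhedral cones is closed.

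\textbf{Step 3: chambers, contractions, and their targets.} I would declare $D\sim D'$ when they have the same fixed components and their movable parts lie in the relative interior of the same face of the same $f_i^*(\nef(X_i))$. The closures of these finitely many classes are the Mori chambers, and they are polyhedral with disjoint interiors, which gives (2). For $D$ in a chamber $C$, Step 1 shows that $R(X,mD)=R(X,mM)$ is finitely generated, and $\psi_{mD}$ factors as the SQM $f_i$ followed by the morphism defined by the semi-ample class $f_{i*}M$. This morphism contracts exactly the $E_j$ appearing in $F$, so it is a birational contraction, and it does not depend on $D\in C$, which gives (4). Conversely, given a birational contraction $g:X\dashrightarrow Y$ with $Y$ $\qq$-factorial, the pullback of an ample class on $Y$, plus small multiples of the $g$-exceptional divisors, lands in the interior of a unique chamber. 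This gives the bijection (3). For (5), $\mov(Y)$ and the SQMs of $Y$ are obtained from the faces of $\mov(X)$ and the $X_i$ via $g$. In addition, $H^1(Y,\oo_Y)=0$ because $Y$ is dominated by a resolution of $X$, and $X$ has $H^1(X,\oo_X)=0$ with rational-type behaviour.

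\textbf{Main obstacle.} I expect the hardest point to be the finiteness of fixed divisors in Step 2, together with the claim that the chambers are genuinely convex. If the direct argument becomes messy, I would instead prove that the Cox ring $\bigoplus_{L\in\pic X}H^0(X,L)$ is finitely generated, by gluing the finitely generated section rings of the cones from Step 1. I would then realize $X$ as a GIT quotient of $\mathrm{Spec}$ of the Cox ring by the Néron--Severi torus, so that (1)--(5) become the standard VGIT chamber decomposition.
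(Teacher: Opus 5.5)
The paper gives no proof of this statement; it is quoted from Hu--Keel. So your outline must stand on its own, and its central step is asserted rather than proved. Everything after Step 1 rests on one claim. That includes finiteness of the fixed divisors (hence (1)), convexity of your classes, and the assertion in Step 3 that $\psi_{mD}$ contracts exactly the components of $F$. The claim is: \emph{if $D=M+F$ with $M\in f_i^*(\nef(X_i))$ and $E$ is a component of $F$, then $E_i=f_{i*}E$ is exceptional for the contraction $\phi\colon X_i\to Z$ defined by $f_{i*}M$.}

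The identity $R(X,D)=R(X,D-tE)$ holds for any fixed component and says nothing about $\phi$, so your ``so'' is a non sequitur. The extremal-ray remark supplies no mechanism, and it is not needed: once the claim holds, finiteness is immediate, since each face of each $\nef(X_i)$ gives a contraction with finitely many exceptional divisors.

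One way to prove the claim is for $D$ big. This suffices, since $\eff(X)$ is the closure of the big cone and $\mov(X)+\sum_j\qq_{\ge0}E_j$ is closed.
\begin{itemize}
\item Suppose $|kD|=|kM|+kF$ for divisible $k$. The inclusion $|k(M+\epsilon E)|+k(F-\epsilon E)\subseteq|kD|$ then shows that every member of $|k(M+\epsilon E)|$ contains $E$ with multiplicity at least $k\epsilon$.
\item Write $f_{i*}M=\phi^*A$ with $A$ ample. If $\phi(E_i)$ were a divisor, then $\phi_*\oo_{X_i}(E_i)/\oo_Z\neq0$.
\item Serre vanishing for $\oo_Z(jA)$ then yields, for $j\gg0$, a member of $|jM+E|$ not containing $E$. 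This contradicts the first point with $\epsilon=1/j$.
\end{itemize}

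You also tacitly assume that the fixed part of $|kD|$ stabilizes. For an arbitrary big $D$ this is not known at that stage. You can recover it by working with the asymptotic fixed part $N$: $D-N$ is a limit of the classes $\frac1k\mathrm{Mob}|kD|$, hence lies in the closed cone $\mov(X)$. Then use that effective exceptional divisors add no sections.

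The Cox-ring fallback does not avoid any of this. Gluing the section rings of Step 1 only covers $\mov(X)$, and extending to $\eff(X)$ needs exactly this control of fixed parts.

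Step 3 also has three problems.
\begin{itemize}
\item Your argument for $H^1(Y,\oo_Y)=0$ is invalid. $H^1(Y,\oo_Y)$ does inject into $H^1$ of a resolution $W$ of $X$. But $H^1(X,\oo_X)=0$ does not force $H^1(W,\oo_W)=0$ without hypotheses on the singularities of $X$ (think of the projective cone over an elliptic curve), and the definition imposes none. Use the factorization you already have instead. $Y$ is the target of the morphism $h\colon X_i\to Y$ given by a semi-ample class, so $h_*\oo_{X_i}=\oo_Y$. Leray then gives $H^1(Y,\oo_Y)\hookrightarrow H^1(X_i,\oo_{X_i})=0$, where the vanishing is part of the definition.
\item Parts (3) and (5) require the target attached to a full-dimensional chamber to be $\qq$-factorial, and you never check this. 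It follows from a dimension count. For such a chamber, the face $\tau$ and the exceptional divisors span $N^1(X)_\qq$, and classes in $\tau$ descend to $\qq$-Cartier classes on the target. Hence every Weil divisor on the target is $\qq$-Cartier.
\item The independence of $\psi_{mD}$ from $D$ in (4) holds only for $D$ in the interior of $C$. The statement's own wording is loose on this point too.
\end{itemize}
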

\begin{example}\label{extremeblowup}
    It is well-known that $Bl_{n-1}\pp^{n-3}$ is a Mori dream space, and its Cox ring is the homogeneous coordinate ring of the grassmannian $\mathrm{Gr}(2,n)$, see for example  \cite{Castravet_Tevelev_2006}.
    Also, its effective cone can be easily described. Let $E_i$ be the exceptional divisor at the $i$-th point, and let $\widetilde{\Lambda}_{ij}$ be the strict transform of the hyperplane spanned from all the points except for $p_i$ and $p_j$, $i,j\ne n$. Numerically, $\widetilde{\Lambda}_{ij}\equiv H-E_i-E_j$ where $H$ is the pullback of a hyperplane class. The divisors $\widetilde{\Lambda}_{ij}$ for all $i,j$ and $E_i$ for all $i$ generate the extremal rays of $\overline{\textup{Eff}}(Bl_{n-1}\pp^{n-3})$.

    Set $\oc=(\frac{1}{n-3},\dots,\frac{1}{n-3},1)$ and recall from Example \ref{exom} that $\overline{M}_\oc\cong Bl_{n-1}\pp^{n-3}$, and that the isomorphism makes the boundary divisors $\delta_{\{i,n\}}$ correspond to the exceptional divisors $E_i$ and $\delta_{\{i,j\}}$ to $\widetilde{\Lambda}_{ij}$. We deduce that $\overline{\textup{Eff}}(\overline{M}_{\oc})$ is finitely generated by 1-codimensional strata, which are precisely the extremal rays of the Mori cone. 
\end{example}

The following proposition is the main result of this section. Its proof requires some preliminary results, which we develop in the next subsection.

Let $X$ be a $\qq$-factorial projective variety. Recall that $X$ is of \textit{Fano type} if there exists an effective divisor $D$ such that the pair $(X,D)$ is klt and $-(K_X+D)$ is ample (see \cite[Definition 2.8]{Kollar_2013} for the definition of klt pair).

\begin{proposition}\label{proposition}Denote $X=Bl_{n-1}\pp^{n-3}$.
\begin{enumerate}
    \item\label{pr1} There is a bijection between the Mori chambers of $X$ and the coarse chambers of $\mathcal{D}_{0,n}$ associated with small Hassett spaces.
    \item\label{pr2} Small Hassett spaces and targets of corresponding birational contractions are isomorphic.
    \item\label{pr3} Small Hassett spaces are Mori dream spaces. 
    \item\label{pr4} Over the complex numbers, small Hassett spaces are of Fano type.
    \item\label{pr5} The targets of the birational contractions of $X$ with $\qq$-factorial image are smooth.
    \item\label{pr6} If $\overline M_{\oa}$ is a small Hassett space, the birational contraction $X\dashrightarrow \overline M_{\oa}$ is an SQM if and only if there are no subsets $I\subset\{1,\dots,n\}$ with $|I|=n-2$ and $\sum_{i\in I}a_i\le1$.
\end{enumerate}
\end{proposition}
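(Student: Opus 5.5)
We compare the Mori chamber decomposition of $X=Bl_{n-1}\pp^{n-3}$ with the VGIT decomposition of $\mathcal{V}_n$, then pass to small Hassett spaces via Proposition \ref{proptypical}.(\ref{proptyp4}).

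\textbf{Step 1: a linear identification of the two cones.} By Example \ref{extremeblowup}, $X\cong\overline M_\oc$ with $\oc=(\frac1{n-3},\dots,\frac1{n-3},1)$. Each $\ob\in\mathcal{V}_n$ is dominated by $\oc$ after rescaling, so there is a reduction $\rho:X\to\overline M_{\ob_\ve}\cong\Sigma_\ob$. Equivalently, the Cox ring of $X$ is the homogeneous coordinate ring of $\mathrm{Gr}(2,n)$, which realizes $X$ as a quotient of the affine cone over $\mathrm{Gr}(2,n)$ by the torus $T\subset SL_n$. By Gelfand–MacPherson duality, the GIT quotients $\mathrm{Gr}(2,n)\sslash_\ob T$ coincide with $(\pp^1)^n\sslash_\ob PGL_2=\Sigma_\ob$.

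We make this identification explicit with the linear map $\phi:\pic(X)_\qq\to\qq^n$ sending
\[
H\mapsto \tfrac{1}{n-3}(1,\dots,1,n-4),\qquad E_i\mapsto \textup{the pullback weight of } \delta_{\{i,n\}}.
\]
The map $\phi$ is chosen so that $\eff(X)$ maps onto the cone over $\overline{\mathcal{V}_n}$. Its extremal rays $E_i$ and $\widetilde\Lambda_{ij}$ should go to the boundary faces $b_i=0$ and $b_i+b_j=1$ (for $n\in\{i,j\}$, after relabeling).

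\textbf{Step 2: matching chambers.} Mori chambers of a Mori dream space with Cox ring $R$ are exactly the VGIT chambers of $\operatorname{Spec}R$ under the grading torus, by Proposition \ref{MDSprop} and \cite{HK}. For a typical $\ob$, the point $D=\phi^{-1}(\ob)$ therefore lies in the interior of a Mori chamber, and $\proj R(X,mD)\cong\operatorname{Spec}R\sslash_D T\cong\Sigma_\ob$.

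The walls on both sides are then the hyperplanes $\sum_{i\in I}b_i=1$. Chambers of $\mathcal{V}_n$ correspond to Mori chambers, which gives (\ref{pr1}) and (\ref{pr2}) via Proposition \ref{proptypical}.(\ref{proptyp4}).

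I expect the main obstacle here: checking that every Mori chamber contains a typical point in its interior, i.e. that no Mori wall is invisible on the GIT side. I would handle it by computing the sections $H^0(X,D)$ as $T$-invariants in Plücker coordinates, equivalently as the weight spaces of $\mathrm{Gr}(2,n)$.

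\textbf{Steps 3–5: consequences.} Smoothness in (\ref{pr5}) follows from Step 2, since small Hassett spaces are fine moduli spaces and hence smooth. Property (\ref{pr3}) follows from Proposition \ref{MDSprop}.(5).

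For (\ref{pr4}), $X$ is a log Fano in the sense that $-K_X=(n-2)H-(n-4)\sum E_i$ is big. Over $\mathbb C$, $X$ is of Fano type by \cite{Castravet_Tevelev_2006}, since a Mori dream space with a klt Cox ring is of Fano type. Fano type is preserved under birational contractions by pushing forward the boundary, after a standard argument taking a general ample $D$ on the target and pulling it back.

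For (\ref{pr6}), $X\dashrightarrow\overline M_\oa$ is an SQM iff no divisor of $X$ is contracted. The divisors of $X$ are the strata $\delta_{\{i,n\}}$ and $\delta_{\{i,j\}}$, and type-2 divisors always survive. By Remark \ref{divisors}, $\delta_{\{i,n\}}$ is contracted iff its complement $I$, with $|I|=n-2$, satisfies $\sum_{i\in I}a_i\le 1$. This gives exactly the stated criterion.
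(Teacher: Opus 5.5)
The genuine gap is in (\ref{pr6}). You assert that the type-2 divisors of $X=\overline M_\oc$, namely the $\widetilde\Lambda_{ij}=\delta_{\{i,j\}}$ with $i,j\ne n$, always survive. This is false.

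In a small $\overline M_\oa$, for every $J$ exactly one of $\sum_{k\in J}a_k$ and $\sum_{k\notin J}a_k$ is $\le 1$ (Lemma \ref{lemmasmall}). So for $|J|=2$ the stratum $\delta_J$ is a divisor iff $\sum_{k\notin J}a_k>1$; otherwise it drops to codimension $n-3$ (Remark \ref{divisors}). For $J=\{i,j\}$ with $i,j\ne n$, the complement is an $(n-2)$-subset \emph{containing} $n$, a case your argument never examines.

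Concretely, take $n=5$ and $\oa=(1-\eta,1-\eta,a,a,a)$ with $3a=2\eta+\epsilon$ and $0<\epsilon<\eta\ll 1$. Then $\overline M_\oa$ is small and no $E_i=\delta_{\{i,5\}}$ is contracted. Yet $\widetilde\Lambda_{12}$, the line through $P_3,P_4$ on $\overline M_{0,5}$, is contracted, so $X\dashrightarrow\overline M_\oa$ is not an SQM. As written, your reasoning would show that $X\dashrightarrow\overline M_\oa$ is an SQM whenever no $(n-2)$-subset avoiding $n$ has weight $\le 1$, which this example refutes. The ``if'' direction of (\ref{pr6}) is therefore unproved.

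The fix is to treat all boundary divisors uniformly, as the paper does by factoring through $\overline M_{0,n}$. The map $\psi$ is an SQM iff the reductions $q:\overline M_{0,n}\to\overline M_\oc$ and $p:\overline M_{0,n}\to\overline M_\oa$ contract the same $\delta_I$. The map $q$ contracts exactly the $\delta_I$ with $2<|I|<n-2$. The map $p$ contracts these together with the $\delta_J$ with $|J|=2$ and $\sum_{k\notin J}a_k\le 1$. You should also replace ``the divisors of $X$ are the strata'' (false: $X$ has many other prime divisors) by the observation that $\psi$ is the identity on $M_{0,n}$, so only boundary divisors can be contracted.

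Step 1 also needs correcting, though these errors are fixable.
\begin{enumerate}
\item There is no reduction morphism $X\to\overline M_{\ob_\ve}$ for general $\ob$. A reduction requires weights $\le\oc$ coordinatewise, so for $n=6$ no chamber with $b_1+b_2+b_3>1$ is reached. Moreover, a nontrivial SQM is never a morphism. This claim is not ``equivalent'' to the Cox ring picture, so drop it.
\item Your explicit $\phi$ is wrong. The Pl\"ucker grading forces $E_i\mapsto L_i+L_n$ and $\widetilde\Lambda_{ij}\mapsto L_i+L_j$, hence $H\mapsto\sum_{i\ne n}L_i+(n-3)L_n$, as in Lemma \ref{compositionlemma}. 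Your $\frac{1}{n-3}(1,\dots,1,n-4)$ is not proportional to this; for $n=5$, with $E_i\mapsto L_i+L_n$, it sends $\widetilde\Lambda_{ij}$ to a vector with negative entries.
\item The extremal rays go to the vertices $L_a+L_b$ of the hypersimplex $\overline{\mathcal V_n}$, not to ``faces'' $b_i+b_j=1$, which are walls.
\end{enumerate}

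With the correct $\phi$, your route to (\ref{pr1})--(\ref{pr2}) is sound: Hu--Keel's identification of Mori chambers with GIT chambers of $\operatorname{Spec}R$, combined with Gelfand--MacPherson. It is the alternative the paper mentions in a remark. The ``main obstacle'' you flag then disappears, because torus stability on $\mathrm{Gr}(2,n)$ and $\ob$-stability on $(\pp^1)^n$ are matched directly. The paper instead proves Lemma \ref{abstractlemma} by Kempf descent and computes $\phi$ with $\psi$-classes, avoiding the Cox ring machinery.

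For (\ref{pr4}), bigness of $-K_X$ is irrelevant. What your argument actually needs is the Gongyo--Okawa--Sannai--Takagi criterion (a Mori dream space with log terminal Cox ring is of Fano type), applied to the Pl\"ucker cone; this replaces the paper's appeal to Araujo--Massarenti. You then push forward a big boundary $\Delta$ with $(X,\Delta)$ klt and $K_X+\Delta\sim_\qq 0$, which stays klt by the negativity lemma.
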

\begin{rmk}
    Recall from Example \ref{qusexample} the isomorphism between the symmetric GIT quotient $(\pp^1)^n\sslash_{\oo(1,\dots,1)} PGL_2$ and the small Hassett space $\overline{M}_{p,q}$ for $p=n$ odd and $q=0$. In \cite[Lemma 6.6]{torres}, it was proved that, in characteristic 0, the linearization $(2/n,\dots,2/n)$ for $n$ odd is the only typical linearization with unstable locus of codimension at least 2 such that the quotient $(\pp^1)^n\sslash_\ob PGL_2$ is Fano. On the other hand, as a consequence of Proposition \ref{proposition} and Proposition \ref{proptypical}, we see that all the typical GIT quotients $(\pp^1)^n\sslash_\ob PGL_2$ are of Fano type over the complex numbers.
\end{rmk}

\subsection{The Mori chamber decomposition of {{$Bl_{n-1}\pp^{n-3}$}}}
The key result of this subsection is presented in Proposition \ref{bijection1}. In \cite[Sec. 2.47]{Bolognesi_Massarenti}, there is a bijection between the VGIT decomposition of $(\pp^1)^n$ with respect to the action of $PGL_2$ and the Mori chamber decomposition of $Bl_{n-1}\pp^{n-3}$, which is explicitly computed in \cite[Theorem 2.32]{Bolognesi_Massarenti}. In Lemma \ref{compositionlemma} and Proposition \ref{bijection1}, we reprove this bijection from a different perspective, identifying the birational contractions of $Bl_{n-1}\pp^{n-3}$ with the GIT quotients arising from the corresponding VGIT chambers.

\begin{notation}\label{notazione}
    For the rest of this section, we set $X=Bl_{n-1}\pp^{n-3}$, $Y=(\pp^1)^n$. We choose a typical linearization $\ob=(b_1,\dots,b_n)$ for the action of $PGL_2$ on $Y$ such that $\Sigma_\ob=(\pp^1)^n\sslash_\ob PGL_2\cong X$. Denote by $Y^{ss}=Y^{ss}(\ob)$ the semi-stable locus of the action, and let $p:Y\dashrightarrow\Sigma_\ob$ be the projection to the quotient. Set also $\oc=(\frac{1}{n-3},\dots,\frac{1}{n-3},1)$, so that $\overline{M}_\oc\cong Bl_{n-1}\pp^{n-3}$. We will use the following natural bases:
\begin{enumerate}
    \item\label{basis1} for $\pic(X)_\qq$, we have $\{E_1,\dots,E_{n-1},H\}$, where the $E_i$ are the exceptional divisors and $H$ is the pullback of a hyperplane class of $\pp^{n-3}$, and
    \item for $\pic(Y)_\qq$, we have $\{L_1,\dots,L_n\}$, where $L_i=\pi_i^*H$ and $\pi_i:Y\to\pp^1$ is the projection on the $i$-th factor. 
\end{enumerate}
We know that such a linearization $\ob$ exists by Proposition \ref{proptypical} and lies in the closure of the coarse chamber of $\oc$. It can be explicitly constructed in the following way: take a reduction morphism $\overline{M}_\oc\to\overline{M}_\oa$ with $\overline{M}_\oa$ small as in the proof of Lemma \ref{lemmasmall}.(\ref{lm2}), and then set $\ob=N(\oa)$ as in the proof of Proposition \ref{proptypical}.(\ref{proptyp4}). 
After an explicit computation, it is easy to see that $\codim(Y\setminus Y^{ss},Y)\ge2$. For this reason, $\pic(Y^{ss})_\qq\cong\pic(Y)_\qq$ and the basis $\{L_1,\dots,L_n\}$ can also be used for $\pic(Y^{ss})_\qq$.
\end{notation}

\begin{lemma}\label{compositionlemma}
    Assuming the setup of Notation \ref{notazione}, there is a composition of isomorphisms
    \begin{equation}\label{composition}
\phi:\pic(X)_{\qq}\xrightarrow{\sim}\pic(\overline{M}_\oc)_{\qq}\xrightarrow{\rho^{*-1}}\pic(\Sigma_\ob)_\qq\xrightarrow{p^{*}}\pic(Y)_\qq,\end{equation} 
where the first arrow is the Kapranov model as in Example \ref{exom}, $\rho:\overline M_\oc\xrightarrow{\sim} \Sigma_\ob$ is given by Proposition \ref{proptypical} and $p:Y^{ss}\to\Sigma_\ob$ is the projection to the quotient. 
Moreover, $\phi$ maps
\[
H\mapsto (n-3)L_n+\sum_{i\ne n} L_i,\qquad E_i\mapsto L_i+L_n\qquad{and}\qquad\widetilde{\Lambda}_{ij}\mapsto L_i+L_j.
\]
\end{lemma}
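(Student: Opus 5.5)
Each arrow in \eqref{composition} is an isomorphism, so $\phi$ is one too. The first arrow is induced by the isomorphism $X\cong\overline M_\oc$ of Example \ref{exom}, and $\rho^{*-1}$ comes from the isomorphism $\rho$ of Proposition \ref{proptypical}. For the last arrow, $\Sigma_\ob$ is a geometric quotient of $Y^{ss}$ by $PGL_2$ with no strictly semi-stable points. Stabilizers are trivial, since stable configurations have at least three distinct points. Hence $Y^{ss}\to\Sigma_\ob$ is a principal $PGL_2$-bundle, and by descent $p^*$ identifies $\pic(\Sigma_\ob)$ with $\pic^{PGL_2}(Y^{ss})$. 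Rationally, the forgetful map from linearized to ordinary line bundles is injective, because the character group of $PGL_2$ is trivial. As $\codim(Y\setminus Y^{ss})\ge2$ (Notation \ref{notazione}), $\pic(Y^{ss})_\qq\cong\pic(Y)_\qq$. Both sides have rank $n$, so $p^*$ is an isomorphism once it is injective. I would add that every $L_i+L_j$ lies in its image (see below), and these span $\pic(Y)_\qq$ for $n\ge3$.

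To compute $\phi$, I would track boundary divisors. Under $X\cong\overline M_\oc$, the divisor $E_i$ corresponds to $\delta_{\{i,n\}}$ and $\widetilde\Lambda_{ij}$ to $\delta_{\{i,j\}}$. The reduction $\rho$ is an isomorphism on $M_{0,n}$, so it carries $\delta_I$ to the closure of the corresponding locus in $\Sigma_\ob$. Since $\ob$ lies in the closure of the coarse chamber of $\oc$, all these divisors are type-2 in $\Sigma_\ob$, namely loci where $p_i=p_j$. Their preimage in $Y^{ss}$ is the diagonal $\Delta_{ij}\cap Y^{ss}$, scheme-theoretically and reduced, because the quotient map is smooth. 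The diagonal of $\pp^1\times\pp^1$ has class $(1,1)$, so $[\Delta_{ij}]=L_i+L_j$. This gives $E_i\mapsto L_i+L_n$ and $\widetilde\Lambda_{ij}\mapsto L_i+L_j$.

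For $H$, I would use the relation $\widetilde\Lambda_{ij}\equiv H-E_i-E_j$, so that $H=\widetilde\Lambda_{ij}+E_i+E_j$. This maps to $(L_i+L_j)+(L_i+L_n)+(L_j+L_n)=2L_i+2L_j+2L_n$, which is not the claimed $(n-3)L_n+\sum_{i\ne n}L_i$. So the relation cannot be read with $\widetilde\Lambda_{ij}$ as the hyperplane through all points except $P_i,P_j$. For $n-1$ points in $\pp^{n-3}$, a hyperplane passes through $n-3$ of them, so it misses exactly two points $P_i,P_j$. Its class is therefore $H-\sum_{k\ne i,j,\,k\le n-1}E_k$. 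With that class, $H=\widetilde\Lambda_{ij}+\sum_{k\ne i,j}E_k$ maps to
\[
L_i+L_j+\sum_{k\ne i,j,\,k<n}(L_k+L_n)=\sum_{k\ne n}L_k+(n-3)L_n,
\]
which matches the statement. The identification in the statement is consistent with this class, and I would use it (the "$H-E_i-E_j$" in Example \ref{extremeblowup} is evidently a typo). As a cross-check, the unique quadric-free description of $\overline M_{\oa_0}=\pp^{n-3}$ realizes $H$ as $\psi_n$-type classes, but the linear algebra above suffices.

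The main obstacle is the descent step. One has to justify that pullbacks of the type-2 boundary divisors are exactly the reduced diagonals with no multiplicity, and that $p^*$ is well defined on $\qq$-Picard groups. Smoothness of $Y^{ss}\to\Sigma_\ob$ (free action on the stable locus) together with the codimension-$2$ statement handles both.
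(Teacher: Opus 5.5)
Your proof is correct, but it reaches the formulas by a different route than the paper.

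The paper works with tautological classes:
\begin{itemize}
\item it quotes \cite[Lemma 2.3]{CT2} for $\rho^*\psi_n=\psi_n-\sum_{i\ne n}\delta_{\{i,n\}}$ and $\rho^*\delta_{\{i,n\}}=\delta_{\{i,n\}}$;
\item it uses $\delta_{\{i,n\}}=-\frac12(\psi_i+\psi_n)$ on the small Hassett space and $\psi_i=-2L_i$ on the trivial family over $Y^{ss}$;
\item it identifies $H=\psi_n$ on $X$ via \cite[Lemma 2.1]{CT2} and the Kapranov map to $\pp^{n-3}$, which is the cross-check you allude to.
\end{itemize}
This gives $\phi(H)$ and $\phi(E_i)$ directly. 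The value $\phi(\widetilde\Lambda_{ij})$ is then read off from the numerical class of $\widetilde\Lambda_{ij}$.

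You instead pull back every boundary divisor geometrically to the reduced diagonal $\Delta_{ij}\cap Y^{ss}$, of class $L_i+L_j$. This is exactly what the paper's relation $\delta_{\{i,n\}}=-\frac12(\psi_i+\psi_n)$ encodes. You then obtain $\phi(H)$ by linear algebra.

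Your version is more elementary and self-contained. It also actually proves that $p^*$ is an isomorphism, via free action, descent, the absence of characters of $PGL_2$, and a rank count; the paper only asserts this. The paper's version has the advantage of producing $\phi(H)$ without needing the class of $\widetilde\Lambda_{ij}$.

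Your correction of that class is right. The hyperplane missing $P_i,P_j$ contains the remaining $n-3$ points, so $\widetilde\Lambda_{ij}\equiv H-\sum_{k\le n-1,\,k\ne i,j}E_k$. The paper's final step invokes $\widetilde\Lambda_{ij}\equiv H-E_i-E_j$, and it yields $L_i+L_j$ only after this correction.
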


\begin{proof}
We can treat $\rho$ as if its target is a small Hassett space $\overline{M}_\oa$ isomorphic to $\Sigma_\ob$. All spaces in the statement have a universal family with $n$ sections. On each of these spaces, the class $\psi_i$ is defined as the first Chern class of the pullback of the relative dualizing sheaf of the universal family via the $i$-th section. We denote the boundary divisors by $\delta_I$ as in Definition \ref{boundariesdef}. Using \cite[Lemma 2.3]{CT2}, we find that $\rho^*\psi_n=\psi_n-\sum_{i\ne n}\delta_{\{i,n\}}$ and $\rho^*\delta_{\{i,n\}}=\delta_{\{i,n\}}$. Therefore, 
\[
\rho^{*-1}(\delta_{\{i,n\}})=\delta_{\{i,n\}}=-\frac{1}{2}(\psi_i+\psi_n)\:\text{ for $i\ne n$, and }\:\]
\[
\rho^{*-1}(\psi_n)=\psi_n+\sum_{i\ne n}\delta_{\{i,n\}}=\psi_n-\frac{1}{2}\sum_{i\ne n}(\psi_i+\psi_n)
.\]

We now understand how $L_i$ descends to $\pic(\Sigma_\ob)_\qq$. As seen in the proof of Proposition \ref{proptypical}.(\ref{proptyp1}), the GIT quotient $\Sigma_\ob$ carries a universal family $\pi:\ou\to\Sigma_{\ob}$ that parametrizes $\ob$-stable curves and there is a pullback diagram
\[ 
\begin{tikzcd}    Y^{ss}\times\pp^1\arrow[r,"\widetilde p"]\dar&\dar\ou\\
    Y^{ss}\arrow[r,"p"]&\Sigma_\ob,
\end{tikzcd},
\] 
where $Y^{ss}\times\pp^1\to Y^{ss}$ is the trivial family with $n$ canonical sections $s_1,\dots,s_n$ defined as $s_i(p_1,\dots,p_n)=((p_1,\dots,p_n),p_i)$, and $\widetilde p$ is given by the universality of $\ou$. In $Y^{ss}$, we have
\[
\psi_i=s_i^*K_{Y^{ss}\times\pp^1 / Y^{ss}}=-2\pi_i^*H_{\pp^1}=-2L_i.\]
As the diagram is a pullback, $p^*\psi_i=\psi_i=-2L_i$. Since in our setting the descent is unique, we conclude that the last arrow in (\ref{composition}) maps $\psi_i$ to $-2L_i$. 
Finally, the composition $p^*\circ\rho^{*-1}$ is such that
\[
p^*\circ\rho^{*-1}(\psi_n)= (n-3)L_n+\sum_{i\ne n} L_i\qquad\textup{and}\qquad p^*\circ\rho^{*-1}(\delta_{\{i,n\}})= L_i+L_n.
\]
Recall from Example \ref{exom} that the isomorphism $X\cong\overline{M}_\oc$ makes the boundary divisors $\delta_{\{i,n\}}$ correspond with the exceptional divisors $E_i$. Moreover, by pulling back $\psi_n$ via the Kapranov reduction morphism $\overline{M}_\oc\to\overline{M}_{\oa_0}\cong\pp^{n-3}$ (cf. Example \ref{exom}), with \cite[Lemma 2.1]{CT2}, we see that $H=\psi_n$ in $X$, that $\phi(H)=(n-3)L_n+\sum_{i\ne n} L_i$ and that $\phi(E_i)=L_i+L_n$. That $\phi(\widetilde{\Lambda}_{ij})=L_i+L_j$ follows from $\widetilde{\Lambda}_{ij}\equiv H-E_i-E_j$.
\end{proof}

Recall from Example \ref{extremeblowup} that the extremal rays of $\eff(X)$ are precisely the exceptional divisors $E_i$ and the strict transforms of the hyperplanes $\widetilde{\Lambda}_{ij}$ spanned from all the points but two. From Lemma \ref{compositionlemma}, it is easy to see that $\phi(\mathrm{int}(\eff(X)))\subset\mathrm{Amp}(Y)$. Recall the map $N:\mathrm{Amp}(Y)\to\mathcal{V}_n$ from Notation \ref{vgitdomain}. Thus, the composition $N\circ\phi$ is well-defined.
\begin{definition}\label{mapping}
We define the map
    \begin{equation*}
        \Phi=N\circ\phi:\mathrm{int}(\eff(X))\to\mathcal{V}_n.
    \end{equation*}
\end{definition} 
\begin{proposition}\label{bijection1}
Let $\Phi$ be the map of Definition \ref{mapping}.
\begin{enumerate}
    \item The map $\Phi$ induces a bijection between the Mori chambers of $X$ and the chambers of the VGIT decomposition of $(\pp^1)^n$ with the action of $PGL_2$. 
    \item If $C$ is a Mori chamber of $\eff(X)$, the target of the birational contraction $\psi_C:X\dashrightarrow Y$ associated with $C$ is isomorphic to $(\pp^1)^n\sslash _{\Phi(C)}PGL_2$.
    \item The Mori chambers of $\eff(X)$ are the preimages of the VGIT chambers of $\mathcal{V}_n$ via $\Phi$. If $D=hH+\sum_{i=1}^{n-1} e_iE_i$, the walls are \[(|I|-1)h+\sum_{i\in I}e_i=0\text{ for }I\subset\{1,\dots,n-1\}
    , I\ne\varnothing.
    \]
\end{enumerate}
\end{proposition}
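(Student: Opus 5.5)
The plan is to compare section rings on $X$ with invariant section rings on $Y$, using the isomorphism $\phi$ of Lemma \ref{compositionlemma}. Let $D$ be a divisor in $\mathrm{int}(\eff(X))$, and set $L=\phi(D)$, which is ample on $Y$ by the remark preceding Definition \ref{mapping}.

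\textbf{Step 1: equality of section rings.} I would prove that for every $m$ with $mD$ integral,
\[H^0(X,mD)\cong H^0(Y,mL)^{PGL_2}.\]
The chain of maps is as follows.
\begin{itemize}
\item Since $X\cong\Sigma_\ob$ and $p:Y^{ss}\to\Sigma_\ob$ is a good quotient (in fact a geometric quotient, since $\ob$ is typical, with free action on the stable locus), sections of a line bundle on $\Sigma_\ob$ pull back to invariant sections on $Y^{ss}$.
\item Conversely, invariant sections on $Y^{ss}$ of the descended bundle descend to $\Sigma_\ob$. The linearization is canonical here, because $PGL_2$ has no characters.
\item Because $\codim(Y\setminus Y^{ss},Y)\ge2$ (Notation \ref{notazione}) and $Y$ is normal, we have $H^0(Y^{ss},mL)=H^0(Y,mL)$.
\end{itemize}
One subtlety is that $p^*$ on Picard groups must match $\phi$ on actual line bundles, not just on $\qq$-classes. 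I would handle this by passing to multiples, which is harmless for $\proj$. The conclusion is
\[R(X,D)\cong R(Y,L)^{PGL_2}.\]

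\textbf{Step 2: identify the targets.} Taking $\proj$ of the ring isomorphism from Step 1, the target of $\psi_{mD}$, which is $\proj R(X,mD)$ by Proposition \ref{MDSprop}, is identified with $Y\sslash_L PGL_2=\Sigma_{N(L)}=\Sigma_{\Phi(D)}$. This uses that the GIT quotient depends only on the normalized linearization. The identification gives (2) as soon as the chambers are matched.

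\textbf{Step 3: match the chambers.} For this I compute the VGIT walls in the coordinates of $D=hH+\sum_{i\le n-1}e_iE_i$. By Lemma \ref{compositionlemma},
\[\phi(D)=\sum_{i\ne n}(h+e_i)L_i+\Big((n-3)h+\sum_{i\ne n} e_i\Big)L_n.\]
The total degree is
\[\Sigma=2(n-2)h+2\sum_{i\ne n} e_i.\]
For $I\subset\{1,\dots,n-1\}$, the VGIT wall $\sum_{i\in I}b_i=1$ becomes
\[2\sum_{i\in I}(h+e_i)=\Sigma,\]
that is,
\[(|I|-(n-2))h+\sum_{i\in I}e_i-\sum_{i\notin I,\,i\ne n}e_i=0.\]
Walls indexed by subsets containing $n$ are the same walls, since $\sum_I b_i=1$ iff $\sum_{I^c}b_i=1$. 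After this rewriting, the expression should reduce to the stated form $(|I|-1)h+\sum_{i\in I}e_i=0$, up to replacing $I$ by the complement-type index and using the $n$-th coordinate. Verifying this bookkeeping is routine.

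\textbf{Step 4: prove $\Phi$ respects the chamber structures.} Mori chambers are characterized by the condition that $D$ and $D'$ lie in the same chamber iff the rings $R(X,D)$ and $R(X,D')$ give the same birational contraction, with the same $\qq$-factorial target and the same exceptional behavior. On the GIT side, typical $\ob$ and $\ob'$ give isomorphic quotients (compatibly with $M_{0,n}$) iff they lie in the same VGIT chamber. So Step 2 shows that $\Phi$ maps each Mori chamber into a single VGIT chamber, and that distinct Mori chambers go to distinct VGIT chambers.

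It remains to check two things:
\begin{itemize}
\item The map $\Phi$ is surjective onto $\mathcal V_n$. This holds because $\Phi$ is a projectivization of a linear isomorphism, under which the interior of the effective cone maps onto the ample cone of $Y$.
\item The VGIT walls pull back exactly to codimension-one Mori walls. This holds because crossing a VGIT wall changes the quotient, and hence changes the contraction.
\end{itemize}
Together these give the bijection (1) and the description of the walls (3).

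\textbf{Main obstacle.} I expect the delicate step to be Step 1. It needs the precise descent statement for line bundles and sections across $p$, including sections on $Y^{ss}$ extending over the unstable locus. It also needs care at non-typical $\Phi(D)$ on walls. There I would simply restrict to chamber interiors and argue by continuity of the wall structure.
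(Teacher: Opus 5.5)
Your route is the paper's. Step 1 is exactly Lemma \ref{abstractlemma}: descent along the free quotient $Y^{ss}\to\Sigma_\ob\cong X$, together with $\codim(Y\setminus Y^{ss},Y)\ge 2$, gives $R(X,D)\cong R(Y,\phi(D))^{PGL_2}$ compatibly with the quotient maps. Your Step 4 unpacks the paper's one-line chamber matching. Two concrete claims, however, are wrong.

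The first is your justification of surjectivity: $\phi$ does \emph{not} map $\mathrm{int}(\eff(X))$ onto $\mathrm{Amp}(Y)$. By Lemma \ref{compositionlemma}, the extremal rays $E_i$ and $\widetilde\Lambda_{ij}$ of $\eff(X)$ go to $L_i+L_n$ and $L_i+L_j$. So $\phi(\eff(X))$ is the cone spanned by all $L_i+L_j$ ($i\neq j$), i.e.\ $\{\sum a_iL_i : a_i\ge 0,\ a_i\le\sum_{j\neq i}a_j\}$. For instance, $L_1+\varepsilon(L_2+\dots+L_n)$ with small $\varepsilon>0$ is ample but not in this cone. Hence $\Phi(\mathrm{int}(\eff(X)))=\{\ob\in\mathcal V_n : b_i<1 \text{ for all } i\}$.

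Your claim would in fact contradict your own Step 1. An ample $L$ whose normalization has some $b_i>1$ has empty semistable locus, so $R(Y,L)^{PGL_2}$ is trivial in positive degrees. On the other hand, every $D\in\mathrm{int}(\eff(X))$ is big. What the bijection actually needs is only that the correct image contains all typical linearizations. This holds because typicality forces $b_i\neq 1$, hence $b_i<1$.

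The second is the wall computation. The equation $2\sum_{i\in I}(h+e_i)=(2n-4)h+2\sum_{i\ne n}e_i$ is equivalent to $(|I|-(n-2))h-\sum_{i\in K}e_i=0$, where $K=\{1,\dots,n-1\}\setminus I$. Your display carries an extra $+\sum_{i\in I}e_i$, and in that form it does not reduce to the stated walls.

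With the correct equation, $|I|-(n-2)=1-|K|$ turns it into $(|K|-1)h+\sum_{i\in K}e_i=0$. This is exactly the formula in (3), with $K$ playing the role of $I$; walls indexed by sets containing $n$ reduce to these by complementation, as you say. The paper asserts this formula without derivation, so once corrected your computation fills that in. With these two repairs, the rest of your argument goes through at the paper's level of detail.
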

\begin{proof}
    If $D$ is a divisor in $\eff(X)$, by Lemma \ref{abstractlemma} presented below, there is a birational contraction $\psi_D:X\cong\Sigma_\ob\dashrightarrow\Sigma_{\Phi(D)}$. We notice that $\psi_D$ is constant on the preimage of the VGIT chamber of $\Phi(D)$. As a consequence, the preimages of the VGIT chambers of $\mathcal{V}_n$ are the Mori chambers of $X$.
\end{proof}

\begin{lemma}\label{abstractlemma}
     Let $Y$ be a normal projective variety with the action of a reductive group $G$. Let $\ol\in\pic^G(Y)$ be an ample line bundle such that $\mathrm{codim}(Y\setminus Y^{ss}(\ol),Y)\ge2$ and the action of $G$ on $Y^{ss}(\ol)$ is free. Then, for any ample $\ol_2\in\pic^G(Y)$, there is a birational contraction $Y\sslash_\ol G\dashrightarrow Y\sslash_{\ol_2}G$.
\end{lemma}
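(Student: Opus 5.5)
We have to show that the natural rational map $Y\sslash_\ol G\dashrightarrow Y\sslash_{\ol_2}G$ extracts no divisors, i.e.\ every divisor on the target is the birational image of a divisor on the source. We first construct the map. Let $U=Y^{ss}(\ol)\cap Y^{ss}(\ol_2)$. This is a nonempty $G$-invariant open set: both semistable loci are open and nonempty, and $Y$ is irreducible. Writing $q:Y^{ss}(\ol)\to Y\sslash_\ol G=:\Sigma$ and $q_2:Y^{ss}(\ol_2)\to Y\sslash_{\ol_2}G=:\Sigma_2$, the restriction $q_2|_U$ is $G$-invariant. Since the action on $Y^{ss}(\ol)$ is free, $q$ is a geometric quotient, in fact a principal $G$-bundle by Luna's slice theorem, and $q(U)$ is open. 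Hence $q_2|_U$ descends to a morphism $q(U)\to\Sigma_2$, giving a rational map $\psi:\Sigma\dashrightarrow\Sigma_2$. It is birational because both quotients contain the image of the common stable locus $U^s=Y^s(\ol)\cap Y^s(\ol_2)$, a $G$-invariant open set with geometric quotient in each. Alternatively, and more cleanly, one can write $\Sigma_2=\proj\bigoplus_m H^0(Y,\ol_2^m)^G$ and note that $\Sigma$ and $\Sigma_2$ are then both Proj's of invariant section rings on the same $Y$.

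The key step is the section-ring description. Since $\codim(Y\setminus Y^{ss}(\ol))\ge2$ and $Y$ is normal, restriction gives $H^0(Y,M)^G=H^0(Y^{ss}(\ol),M)^G$ for every $G$-line bundle $M$. Freeness makes $q$ a principal bundle, so $M|_{Y^{ss}(\ol)}$ descends to a line bundle $\overline M$ on $\Sigma$ with $H^0(Y^{ss}(\ol),M)^G=H^0(\Sigma,\overline M)$ (Kempf's descent lemma, whose stabilizer condition is trivially satisfied). Therefore
\[
\Sigma_2=\proj\bigoplus_{m\ge0}H^0(\Sigma,\overline{\ol_2}^{\,m}),
\]
and $\psi$ coincides with the map $\psi_{\overline{\ol_2}}$ defined by the big divisor $\overline{\ol_2}$ on $\Sigma$. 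Bigness holds because the map is birational, or because $\ol_2$ is ample on $Y$ and $\dim\Sigma_2=\dim\Sigma$.

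Next, to show $\psi$ is a birational contraction, we apply the standard fact from \cite{HK}, \cite{okawa}: for a normal projective $\Sigma$ and a big divisor $D$ with finitely generated section ring, the map $\Sigma\dashrightarrow\proj R(\Sigma,D)$ is a birational contraction. Finite generation holds here because $R$ is a ring of invariants of the finitely generated ring $\bigoplus H^0(Y,\ol_2^m)$ under a reductive group. To see the contraction property directly, let $F\subset\Sigma_2$ be a prime divisor. Take a resolution $\Sigma\xleftarrow{a}Z\xrightarrow{b}\Sigma_2$ and write $b^*A=M+F_{fix}$ for $A$ ample with $a_*$ of the mobile part equal to $mD$. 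If $F$ were $a$-exceptional but not $b$-exceptional, then sections of $mD$ pulled back would all vanish to order growing linearly along the $a$-exceptional divisor, contradicting that they generate $b^*\mathcal O(1)$ near the generic point of the strict transform of $F$. This is the standard argument and I expect it to be routine.

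The main obstacle is justifying the descent identification $H^0(Y,\ol_2^m)^G=H^0(\Sigma,\overline{\ol_2}^{\,m})$ together with the compatibility of the two $\proj$ constructions with $\psi$. I would handle it by the normality and Hartogs extension over the codimension-$\ge2$ unstable locus, plus Kempf descent for free actions. Both hypotheses of the lemma are used exactly here: the codimension condition gives the extension, and freeness gives the descent.
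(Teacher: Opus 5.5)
Your proposal follows essentially the same route as the paper. Hartogs extension across the codimension-$\ge2$ unstable locus, together with Kempf descent, identifies $Y\sslash_{\ol_2}G$ with $\proj R(Y\sslash_\ol G,D)$ for the descended divisor $D$; the resulting map agrees with the quotient map for $\ol_2$ on $U=Y^{ss}(\ol)\cap Y^{ss}(\ol_2)$; and the contraction property is the standard fact for big divisors with finitely generated section ring, for which the paper cites \cite[Lemma 1.6]{HK}.

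One caveat, which the paper's proof shares: your claims that $Y^{ss}(\ol_2)\neq\varnothing$ and $Y^s(\ol)\cap Y^s(\ol_2)\neq\varnothing$ are not automatic for an arbitrary ample $\ol_2$. On $(\pp^1)^n$, a normalized weight $b_1\ge1$ gives an empty quotient or a point, so this condition must be added as a hypothesis. It does hold in the paper's application, since $\Phi(D)$ for $D$ in the interior of the effective cone has all $b_i<1$, and then generic configurations are stable.
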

\begin{proof}
Let $\pi:Y\dashrightarrow Y\sslash_\ol G$ be the projection to the quotient. As a consequence of Kempf's Descent Lemma \cite[Théorème 2.3]{refperkempf}, there is an isomorphism $\pic(Y\sslash_\ol G)\xrightarrow{\pi^*}\pic^G(Y)$, where we have identified $\pic^G(Y^{ss}(\ol))\cong\pic^G(Y)$. Up to choosing a multiple of $\ol_2$, we can find an effective divisor $D$ of $Y\sslash_\ol G$  such that $\ol_2=\pi^*\oo(D)$. Since $R(Y,\ol_2)\cong R(Y^{ss}(\ol),\pi^*D)$, we have
\begin{align*}
    Y\sslash_{\ol_2}G&=\proj R(Y,\pi^*D)^G\cong\proj R(Y^{ss}(\ol),\pi^*D)^G\cong\\&\cong\proj R(Y\sslash _{\ol}G,\pi_*(\pi^*D))^G=\proj R(Y\sslash _{\ol}G,D).
\end{align*}
Therefore, the linear system $|D|$ gives a rational map $\psi_D:Y\sslash _{\ol}G\dashrightarrow Y\sslash _{\ol_2}G$. Up to choosing multiples of $\ol$, we can assume that $R(Y\sslash_\ol G,D)$ is generated in degree 1 by sections $s_1,\dots,s_N$. As $p^*:R(Y\sslash_\ol G,D)\xrightarrow{\sim}R(Y,\ol_2)^G$ is an isomorphism, the ring $R(Y,\ol_2)^G$ is generated in degree 1 by sections $p^*s_1,\dots,p^*s_N$. Moreover, if we let $U=Y^{ss}(\ol)\cap Y^{ss}(\ol_2)$ and  $\pi_2:Y\dashrightarrow Y\sslash_{\ol_2} G$, for every $x\in U$ we have \[\psi_D\circ\pi(x)=[s_1(\pi(x)):\dots:s_N(\pi(x))]=[\pi^*s_1(x):\dots:\pi^*s_N(x)]=\pi_2(x).
\]
Hence, we prove that $\psi_D\circ \pi|_U=\pi_2|_U$, the restriction of $\psi_D$ to $\pi(U)$ is an isomorphism and $\psi_D$ is birational. That it is a birational contraction follows from \cite[Lemma 1.6]{HK}.
\end{proof}

\begin{proof}[Proof of Proposition {{\ref{proposition}}}]
Parts (\ref{pr1}) and (\ref{pr2}) follow from combining Proposition \ref{bijection1} and Proposition \ref{proptypical}.(\ref{proptyp4}). (\ref{pr3}) follows since the birational contractions of a Mori dream space are Mori dream spaces (see Proposition \ref{MDSprop}). As small Hassett spaces are smooth, we have (\ref{pr5}).

We prove (\ref{pr6}). Let $\oc$ be a tuple such that $\overline M_\oc\cong Bl_{n-1}\pp^{n-3}$, let $\overline M_\oa$ be small and let $\psi:\overline M_\oc\dashrightarrow\overline M_\oa$ be the associated birational contraction. As $\psi$ is an isomorphism on the configuration space $ M_{0,n}$, there is a commutative diagram \begin{equation}\label{situation2}
    \begin{tikzcd}
    &\overline M_{0,n}\arrow[dl,"q",swap]\arrow[dr,"p"]&\\
    \overline M_{\oc}\arrow[rr,"\psi",dashed]&&\overline M_{\oa}
    \end{tikzcd},\end{equation}
where $q$ and $p$ are reduction morphisms. Recall from Remark \ref{divisors} that, if $|I|=2$ or $n-2$, the divisor $\delta_I$ is mapped by $q$ to a type-2 divisor and not contracted. On the other hand, if $2<|I|<n-2$, the dimension of $\delta_I$ drops in $\overline M_{\oc}$, and $\delta_I$ is contracted by $q$. Thus, if we ask that all divisors $\delta_I$ with $2<|I|<n-2$ do not get contracted by $p$, we have that $\psi$ is an SQM. 

It has been proved in \cite{massarentiaraujo} that, over the complex numbers, the variety $Bl_{n-1}\pp^{n-3}$ is of Fano type for all $n$. Over the complex numbers, it follows from its properties and \cite[Corollary 5.5]{surjimageslogfano} that being of Fano type for a Mori dream space is preserved under its birational contractions. As small Hassett spaces are the birational contractions of $X$, we deduce (\ref{pr4}).
\end{proof}

\begin{rmk}
Given a small Hassett space $\overline{M}_\oa$, a commutative diagram as (\ref{situation2}) can actually be found without using Proposition \ref{bijection1} and GIT quotients. Indeed, the two reduction morphisms $p$ and $q$ induce a birational map $\overline M_{\oc}\dashrightarrow \overline M_{\oa}$. By determining which boundary divisors get contracted by $q$ and $p$ as in the proof of \ref{proposition}.(\ref{pr6}), one proves that the map $\overline M_{\oc}\dashrightarrow \overline M_{\oa}$ is a birational contraction. However, this procedure does not give a complete picture of the birational contractions of $Bl_{n-1}\pp^{n-3}$ as Proposition \ref{proposition} does. 
\end{rmk}
\begin{rmk}
    It is worth noting that one could alternatively use the classical Gelfand–MacPherson correspondence \cite[Theorem 2.2.4]{kapranov}, which provides a natural identification between quotients $(\pp^1)^n\sslash PGL_2$ and torus quotients of the Grassmannian $\mathrm{Gr}(2,n)$. For these torus quotients, \cite[Corollary 2.4]{HK} can be adapted and used to find the birational contractions of $X$. As a matter of fact, Lemma \ref{abstractlemma} uses some specific hypotheses similar to \cite[Theorem 2.3]{HK} to obtain the desired conclusion in our setting.    
\end{rmk}

\section{The Mori cone of small Hassett spaces}
\label{Lastsec}

In this section, we prove that the effective and the Mori cone of small Hassett spaces are generated by strata. The generation of the effective cone is induced by $Bl_{n-1}\pp^{n-3}$ and its birational contractions. Moreover, this highlights another difference between small Hassett spaces and  $\overline{M}_{0,n}$, whose effective cone is not generated by 1-codimensional strata for $n\ge6$ (see \cite{Hassett2001OnTE}), and not even finitely generated for $n\ge8$ (see \cite{mullane2025isoresidualfibrationsbirationalgeometry}).

\begin{theorem}\label{mlkd}
    Let $\overline M_{\oa}$ be a small Hassett space. Then, $\overline{\textup{Eff}}(\overline M_{\oa})$ is finitely generated by 1-codimensional strata. Moreover, 1-codimensional strata are precisely the extremal rays if and only if there are no subsets $I\subset\{1,\dots,n\}$ with $|I|=n-2$ and $\sum_{i\in I}a_i\le1$.
\end{theorem}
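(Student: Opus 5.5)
By Proposition \ref{proposition}, every small Hassett space $\overline M_\oa$ is the target of a birational contraction $\psi:X\dashrightarrow\overline M_\oa$, where $X=Bl_{n-1}\pp^{n-3}\cong\overline M_\oc$. We fit $\psi$ into diagram (\ref{situation2}), with reduction morphisms $q:\overline M_{0,n}\to\overline M_\oc$ and $p:\overline M_{0,n}\to\overline M_\oa$. The plan is to push the effective cone of $X$ forward along $\psi$ and to identify the images of its generators with boundary strata of $\overline M_\oa$.

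\textbf{Generation.} For a birational contraction $\psi$ between $\qq$-factorial varieties, $\psi_*$ is a well-defined surjection $N^1(X)_\qq\to N^1(\overline M_\oa)_\qq$. This holds because $\psi$ is surjective in codimension one, so every prime divisor on $\overline M_\oa$ is the pushforward of its strict transform on $X$. Hence $\psi_*(\eff(X))=\eff(\overline M_\oa)$: the inclusion $\subseteq$ holds because pushforwards of effective divisors are effective, and $\supseteq$ follows from the strict-transform remark and the finite generation of $\eff(X)$. By Example \ref{extremeblowup}, $\eff(X)$ is generated by the $E_i=\delta_{\{i,n\}}$ and the $\widetilde\Lambda_{ij}=\delta_{\{i,j\}}$, that is, by the divisors $\delta_I$ of $\overline M_\oc$ with $|I|=2$ or $|I|=n-2$. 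Since $\psi$ is an isomorphism on $M_{0,n}$ and diagram (\ref{situation2}) commutes, $\psi_*\delta_I$ equals $p_*$ of the corresponding boundary divisor $\delta_I$ of $\overline M_{0,n}$. By Remark \ref{divisors}, this is either a type-2 divisor $\delta_I$ of $\overline M_\oa$ or zero. It is zero exactly when $p$ contracts $\delta_I$, which, because $\overline M_\oa$ is small, happens exactly when $|I|=n-2$ and $\sum_{i\in I}a_i\le 1$. (One should also check via Lemma \ref{lemmasmall} that every type-2 divisor of $\overline M_\oa$ arises this way; this is clear because every $\delta_I$ with $|I|=2$ is of this form.) So $\eff(\overline M_\oa)$ is generated by finitely many $1$-codimensional strata.

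\textbf{Extremality.} If there is no $I$ with $|I|=n-2$ and $\sum_{i\in I}a_i\le1$, then by Proposition \ref{proposition}.(\ref{pr6}) the map $\psi$ is an SQM. In that case $\psi_*$ is an isomorphism of Néron–Severi spaces carrying $\eff(X)$ isomorphically onto $\eff(\overline M_\oa)$ and sending each extremal ray $E_i,\widetilde\Lambda_{ij}$ to a distinct stratum. Hence all $1$-codimensional strata are extremal. Conversely, suppose such an $I$ exists, say $I=\{1,\dots,n\}\setminus\{j,k\}$. Then $\sum_{i\in I}a_i\le1$ forces every pair inside $I$ to have weight at most $1$, and $|I|=n-2\geq 3$ once $n\ge5$. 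I would exhibit a strata relation showing some $\delta_{\{i,l\}}$ with $i,l\in I$ is not extremal. The natural candidate is the relation on $X$ given by $\widetilde\Lambda_{il}\equiv H-E_i-E_l$, combined with a relation among strata expressing $H$ (or $\psi_n$). After pushing forward and killing the contracted divisor $\delta_{\{j,k\}}$ (the one with complement $I$), this should write $\delta_{\{i,l\}}$ as a positive combination of other strata. Alternatively, one can argue numerically: when $\psi$ contracts a divisor, the Picard rank drops, and a direct count using the images of the $E$'s and $\Lambda$'s shows that the remaining rays cannot all be extremal.

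\textbf{Main obstacle.} I expect the hard step to be this converse direction: producing an explicit linear relation in $N^1(\overline M_\oa)$ that demonstrates non-extremality, and handling small-$n$ edge cases. My first attempt would be to use the formula $\phi$ of Lemma \ref{compositionlemma}, which identifies $\pic$ with $\langle L_1,\dots,L_n\rangle$ with $\delta_{\{i,l\}}\mapsto L_i+L_l$, together with the fact that contracting $\delta_{\{j,k\}}$ imposes the relation $L_j+L_k\equiv0$ in the quotient. For $i,l,m\in I$ one would then get $2(L_i+L_l)=(L_i+L_m)+(L_l+L_m)+(L_i+L_l)-2L_m$, and one would look for a combination exhibiting $\delta_{\{i,l\}}$ as a nonnegative combination of $\delta_{\{i,j\}},\delta_{\{l,k\}}$ and similar strata.
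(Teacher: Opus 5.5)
Your argument for finite generation and for the ``if'' half of the equivalence is the paper's proof. You push the generators $E_i=\delta_{\{i,n\}}$ and $\widetilde\Lambda_{ij}=\delta_{\{i,j\}}$ of $\eff(X)$ forward along the birational contraction $\psi$. When no $I$ with $|I|=n-2$ and $\sum_{i\in I}a_i\le1$ exists, you use Proposition \ref{proposition}.(\ref{pr6}) to get an SQM and hence an identification of effective cones. You also justify $\psi_*\eff(X)=\eff(\overline M_\oa)$, and which strata are killed, more carefully than the paper does.

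The genuine gap is the ``only if'' direction, which you only sketch. The paper's written proof also stops at the SQM case, so it does not supply this step. Moreover, your sketch aims at a false target. It is not true in general that some $\delta_{\{i,l\}}$ fails to be extremal, and the same example below refutes your counting alternative. Take $\oa_0=(\frac1{n-2},\dots,\frac1{n-2},1)$ from Example \ref{exom}, with $n\ge5$. This space is small, and every $I=\{1,\dots,n-1\}\setminus\{m\}$ has $\sum_{i\in I}a_i=1$. Yet $\overline M_{\oa_0}\cong\pp^{n-3}$ has Picard rank one, and every $1$-codimensional stratum is a hyperplane spanning the unique extremal ray. So ``precisely'' must be read as a bijection between strata and extremal rays. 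What you must show is that either some stratum is not extremal or two distinct strata span the same ray.

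This follows from a single relation. By Lemma \ref{compositionlemma}, $\phi(\delta_{\{a,b\}})=L_a+L_b$ for every pair (with $\delta_{\{a,n\}}=E_a$). Hence, for distinct $i,j,k,l$,
\[
\delta_{\{i,l\}}+\delta_{\{j,k\}}\equiv\delta_{\{i,j\}}+\delta_{\{k,l\}}\quad\text{in } N^1(X).
\]
Work with this rather than $\widetilde\Lambda_{il}\equiv H-E_i-E_l$, which is a misprint in Example \ref{extremeblowup}. The hyperplane through all points but $P_i,P_l$ has class $H-\sum_{m\notin\{i,l,n\}}E_m$.

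Now let $n\ge5$ and $I=\{j,k\}^c$ with $\sum_{i\in I}a_i\le1$, so that $a_j+a_k>1$. Then $\psi_*\delta_{\{j,k\}}=0$. For $i,l\in I$, the divisor $\delta_{\{i,l\}}$ is not contracted, since $\{i,l\}^c\supseteq\{j,k\}$ has weight $>1$. Pushing forward gives
\[
\psi_*\delta_{\{i,l\}}=\psi_*\delta_{\{i,j\}}+\psi_*\delta_{\{k,l\}}.
\]
Both summands are effective (each is a stratum or $0$), and at least one is nonzero. If $\psi_*\delta_{\{i,l\}}$ spans an extremal ray, both summands lie on that ray, so a stratum different from $\delta_{\{i,l\}}$ spans it too. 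Otherwise $\delta_{\{i,l\}}$ is not extremal. Either way, strata and extremal rays are not in bijection. The case $n=4$ is trivial: there are three boundary points on $\pp^1$ and a single ray.
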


\begin{proof}  
Set $\oc=(\frac{1}{n-3},\dots,\frac{1}{n-3},1)$. Consider $f:Bl_{n-1}\pp^{n-3}\cong\overline M_{\oc}\dashrightarrow \overline M_{\oa}$ as given by Proposition \ref{proposition}. We stated in Example \ref{extremeblowup} that $\overline{\textup{Eff}}(\overline M_{\oc})$ has 1-codimensional strata as extremal rays. Since $\overline M_{\oc}$ is a Mori dream space and $f$ is a birational contraction, the images of the extremal rays of $\overline{\textup{Eff}}(\overline M_{\oc})$ through $f$ will generate $\overline{\textup{Eff}}(\overline M_{\oa})$. Since reduction morphisms map strata to strata, if not contracted, these images are also 1-codimensional strata, and the first part is proved.

With the additional hypothesis on $\oa$, by Proposition \ref{proposition}.(\ref{pr6}) we have that $f$ is an SQM. Hence, $\overline{\textup{Eff}}(\overline M_{\oa})$ is isomorphic to $\overline{\textup{Eff}}(\overline M_{\oc})$ and 1-codimensional strata are its extremal rays.
\end{proof}

We report a lemma from \cite{keel1996contractibleextremalraysoverlinem0n} that is useful for obtaining information on the Mori cone when the generators of the effective cone are known.
\begin{lemma}[{{\cite[Corollary 2.3]{keel1996contractibleextremalraysoverlinem0n}}}]\label{keelmc}
    Let $X$ be a projective variety and let $D\subset X$ be a reduced effective divisor whose irreducible components span $\overline{\textup{Eff}}(X)$, and let $W(D)\subset\overline{NE}(X)$ be the closure of the cone generated by the classes of curves in $D$. Then, $W(D)=\overline{NE}(X)$. 
\end{lemma}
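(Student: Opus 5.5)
The plan is to argue by duality. Since $W(D)\subseteq\overline{NE}(X)$ trivially, it suffices to show $W(D)^\vee\subseteq\nef(X)$. That is, every $\mathbb R$-Cartier class $L$ with $L\cdot w\ge0$ for all $w\in W(D)$ should satisfy $L\cdot C\ge0$ for every irreducible curve $C\subset X$. Fix an ample class $A$ and replace $L$ by $L'=L+\ve A$ with $0<\ve\ll1$. Then $L'$ is strictly positive on $W(D)\setminus\{0\}$, and it is enough to prove $L'$ nef and let $\ve\to0$.

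The first step is to split $\overline{NE}(X)$ into two pieces. Write $D=\sum_i D_i$. Every irreducible curve $C$ either lies in $D$, so $[C]\in W(D)$, or does not, in which case $D_i\cdot C\ge0$ for all $i$. Let $N=\{x\in\overline{NE}(X)\,:\,D_i\cdot x\ge0\ \forall i\}$, a closed convex cone. Then every effective $1$-cycle is a sum $w+n$ with $w\in W(D)$ and $n\in N$. Boundedness of $A\cdot w,\ A\cdot n\le A\cdot(w+n)$ together with compactness of the slices $\{A\cdot x=1\}$ of both closed cones shows that $W(D)+N$ is closed. Hence $\overline{NE}(X)=W(D)+N$.

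The second step uses the hypothesis on $D$. Because the $D_i$ generate $\overline{\textup{Eff}}(X)$, which is full-dimensional, the ample class $A$ lies in the interior. So we may write $A\equiv\sum c_iD_i$ with all $c_i>0$. Likewise every nef class, being pseudo-effective, is a nonnegative combination $\sum a_iD_i$. Any such class is $\ge0$ on $N$, by the definition of $N$. This is the mechanism that transfers positivity from $W(D)$ to all curves.

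The third step is a Kleiman-type boundary argument. Set $t_0=\sup\{t\ge0\,:\,A+tL'\text{ nef}\}$; this is positive since $A$ is ample. If $t_0=\infty$ then $L'$ is nef and we are done, so assume $t_0<\infty$ and put $M=A+t_0L'$. Then $M$ is nef but not ample, so there is $0\ne z\in\overline{NE}(X)$ with $M\cdot z=0$. Write $z=w+n$ as in the first step. Both $M\cdot w$ and $M\cdot n$ are nonnegative, so both vanish. Since $M=A+t_0L'$ is strictly positive on $W(D)\setminus\{0\}$, we get $w=0$ and hence $z=n\in N$. Moreover $A\cdot n>0$ together with $A\equiv\sum c_iD_i$ forces $D_j\cdot n>0$ for some $j$.

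The main obstacle is turning this configuration into a contradiction. The aim is to show that $A+tL'$ remains nonnegative on $N$ for $t$ slightly larger than $t_0$; the cone $W(D)$ poses no problem there, since both $A$ and $L'$ are positive on it. The first thing I would try is to expand $M\equiv\sum a_iD_i$. From $M\cdot n=0$ one gets $a_j=0$ whenever $D_j\cdot n>0$. I would then combine this with the strictly positive expansion of $A$ to produce, for $t$ slightly larger than $t_0$, an expression of $A+tL'$ as a nonnegative combination of the $D_i$. That would make it nonnegative on $N$, contradicting the maximality of $t_0$.

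If that fails, the fallback is to first prove that $L'$ itself is pseudo-effective. It would then be a combination $\sum b_iD_i$ with $b_i\ge0$, hence nonnegative on $N$, and combined with $L'>0$ on $W(D)$ it would be nef directly. I am not sure that pseudo-effectivity of $L'$ can be obtained without a duality input such as movable curves lying in $N$, and this is where I expect most of the work to be.
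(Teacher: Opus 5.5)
Your reduction to $W(D)^\vee\subseteq\nef(X)$, the decomposition $\overline{NE}(X)=W(D)+N$, and the observation that a pseudo-effective class nonnegative on $W(D)$ is nef are all correct. There is, however, a genuine gap: the proof stops exactly at the decisive step, and the route you sketch for it cannot work.

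From $M\equiv\sum a_iD_i$ and $A\equiv\sum c_iD_i$, the natural expression is $A+tL'=(1-t/t_0)A+(t/t_0)M$. Its coefficient at an index $j$ with $a_j=0$ is $(1-t/t_0)c_j<0$ for $t>t_0$. So the information $a_j=0$ works against you, and nothing in your setup produces a different expansion.

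More fundamentally, Steps 1--3 are pure cone bookkeeping and cannot yield a contradiction. Take $X$ a smooth projective curve, $D$ a point and $L=-2A$. Then $\overline{\textup{Eff}}(X)$ is spanned by $[D]$, $W(D)=0$ and $N=\overline{NE}(X)$. Your whole configuration occurs verbatim with $M=0$, yet the conclusion $W(D)=\overline{NE}(X)$ is false. So the lemma tacitly needs $\dim X\ge 2$, which the paper, quoting Keel--McKernan, leaves implicit, and any proof must use the geometry of the $D_i$.

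Your fallback is circular. By your Step 2, pseudo-effectivity of $L'$ is already equivalent to its nefness. Movable curves lie in $N$, where nothing is known about the sign of $L'$.

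The missing idea is that $M$ is \emph{big}:
\begin{itemize}
\item Since $L'$ is nonnegative on every curve in $D_i$ and $A|_{D_i}$ is ample, $M|_{D_i}=A|_{D_i}+t_0L'|_{D_i}$ is ample on each $D_i$.
\item Being nef, $M$ is pseudo-effective, so $M\equiv\sum a_iD_i$ with $a_i\ge 0$. The $a_i$ are not all zero, because $M$ is positive on curves in $D$; this is where $\dim X\ge 2$ is used.
\item Hence $M^{\dim X}=\sum_i a_i\,(M|_{D_i})^{\dim X-1}>0$. A nef class with positive top self-intersection is big, so $M$ lies in the interior of $\overline{\textup{Eff}}(X)$.
\end{itemize}

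Bigness closes the argument in either of two ways. First, $A+tL'=M+(t-t_0)L'$ stays pseudo-effective for $t$ slightly larger than $t_0$. It is also positive on $W(D)\setminus\{0\}$, so it is nef by your Step 2, contradicting the maximality of $t_0$. Second, being interior, $M$ admits an expansion with all $a_i>0$. Then $M\cdot n=0$ and $D_i\cdot n\ge 0$ force $D_i\cdot n=0$ for every $i$, hence $A\cdot n=0$ and $n=0$, contradicting $z\neq 0$.

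The same computation also rescues your fallback. Let $s_0=\inf\{s\ge 0: L+sA \text{ pseudo-effective}\}$. Then $L+s_0A$ is nef. If $s_0>0$, it is ample on every $D_i$, hence big, contradicting the minimality of $s_0$. So $s_0=0$ and $L$ is pseudo-effective, hence nef.
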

\begin{cor}\label{lemma2}
    Let $\overline M_{\oa}$ be a small Hassett space. Then, $\overline{NE}(\overline M_{\oa})$ is finitely generated by 1-dimensional strata.
\end{cor}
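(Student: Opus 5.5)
We argue by induction on $n$ (equivalently on $\dim \overline M_\oa = n-3$), using Lemma \ref{keelmc} together with Theorem \ref{mlkd} and Lemma \ref{lemmasmall}.(\ref{lm3}). For the base cases, $\dim\overline M_\oa\le 1$: when $n=4$ a small Hassett space is $\pp^1$, which is itself a 1-dimensional stratum (the whole space). So $\overline{NE}$ is the ray spanned by its class, and there is nothing to prove.

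For the inductive step, let $\overline M_\oa$ be small with $\dim\overline M_\oa\ge2$. By Theorem \ref{mlkd}, $\overline{\textup{Eff}}(\overline M_\oa)$ is generated by finitely many 1-codimensional strata $\delta_{I_1},\dots,\delta_{I_r}$. By Lemma \ref{lemmasmall}.(\ref{lm4}) these are all type-2 divisors, i.e. $I_k=\{i,j\}$ with $a_i+a_j\le1$. Let $D=\sum_k\delta_{I_k}$, with the reduced structure. Lemma \ref{keelmc} then gives $\overline{NE}(\overline M_\oa)=W(D)$. This is the closure of the sum of the images of $\overline{NE}(\delta_{I_k})$ under the pushforwards $N_1(\delta_{I_k})\to N_1(\overline M_\oa)$; indeed every irreducible curve in $D$ lies in some component $\delta_{I_k}$.

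Each $\delta_{I_k}$ is isomorphic to the Hassett space $\overline M_{\oa'}$ obtained by merging the two weights (Remark \ref{divisors}). By Lemma \ref{lemmasmall}.(\ref{lm3}) it is small with $n-1$ markings. By induction, $\overline{NE}(\delta_{I_k})$ is finitely generated by its 1-dimensional strata. One checks that the strata of $\overline M_{\oa'}$ correspond under the gluing/identification map to strata of $\overline M_\oa$ contained in $\delta_{I_k}$: a stratum of $\overline M_{\oa'}$ is the image of a stratum of $\overline M_{0,n-1}$, and composing with the map identifying the merged point with the pair $\{i,j\}$ it becomes the image of a stratum of $\overline M_{0,n}$ lying in $\delta_{\{i,j\}}$. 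Hence the pushforward of a 1-dimensional stratum is a 1-dimensional stratum of $\overline M_\oa$.

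Therefore $W(D)$ is the closure of a cone spanned by the finitely many 1-dimensional strata of $\overline M_\oa$. This cone is already closed, being finitely generated, so $\overline{NE}(\overline M_\oa)$ is finitely generated by 1-dimensional strata.

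The main obstacle is this compatibility of strata under $\delta_{\{i,j\}}\cong\overline M_{\oa'}$. Concretely, we must verify that a 1-dimensional stratum in $\overline M_{\oa'}$, defined via the reduction $\overline M_{0,n-1}\to\overline M_{\oa'}$, is the image under $\overline M_{0,n}\to\overline M_\oa$ of a stratum of $\overline M_{0,n}$. We would argue via the commutative square formed by the gluing $\overline M_{0,\{i,j,*\}}\times\overline M_{0,n-1}\cong\overline M_{0,n-1}\to\delta_{\{i,j\}}\subset\overline M_{0,n}$ and the reduction morphisms. The reduction $\overline M_{0,n}\to\overline M_\oa$ restricted to this boundary divisor factors through $\overline M_{0,n-1}\to\overline M_{\oa'}$, since contracting the component carrying $p_i,p_j$ is exactly the merging of the two points. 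The square commutes by the universal property of $\overline M_{\oa'}$, because both routes produce the same family of $\oa$-stable curves.
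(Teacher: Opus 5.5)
Your proof is correct and follows the paper's argument: induction on $n$, Theorem \ref{mlkd} together with Lemma \ref{keelmc} to reduce to curves lying in the type-2 boundary divisors, and Lemma \ref{lemmasmall}.(\ref{lm3}) to apply the inductive hypothesis to each $\delta_{\{i,j\}}\cong\overline M_{\oa'}$. Your explicit check that 1-dimensional strata of $\overline M_{\oa'}$ push forward to 1-dimensional strata of $\overline M_{\oa}$, via the commutative square with the reduction morphisms, supplies a step that the paper leaves implicit when it says ``putting all the pieces'' together.
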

\begin{proof}
    We proceed by induction on the number of marked points $n$. For $n=3,4$ the assertion is trivial. By Theorem \ref{mlkd} we know that $\overline{\textup{Eff}}(\overline M_{\oa})$ is finitely generated by $1$-codimensional strata. Let $\Delta$ be the union of all the boundary divisors. If we set $W(\Delta)$ to be the closure in $\overline{NE}(\overline{M}_{\oa})$ of the cone spanned by the curves lying on the divisors $\Delta$, by Lemma \ref{keelmc} we have    \begin{equation}\label{equa}
    \overline{NE}(\overline M_{\oa})=W(\Delta).
    \end{equation} 
    Since $\overline M_{\oa}$ is small, by Lemma \ref{lemmasmall}.(\ref{lm3}), 1-codimensional strata in $\overline M_{\oa}$ are isomorphic to small Hassett spaces with $n-1$ marked points. Let $I=\{i_1,i_2\}$ and let $\delta_I\cong\overline{M}_{\oa'}$, with the weight tuple \[\oa'=(a_{i_1}+a_{i_2},a_1,\dots,\hat{a}_{i_1},\dots,\hat{a}_{i_2},\dots,a_n).\] By inductive hypothesis, $\overline{NE}(\overline M_{\oa'})$ is generated by its 1-dimensional strata. By putting all the pieces in equation (\ref{equa}), we find that $W(\Delta)$ is the closed cone generated by $1$-dimensional strata in $\overline M_{\oa}$.
\end{proof}
\begin{rmk}
    The proof of Corollary \ref{lemma2} relies on two facts:
    \begin{enumerate}
        \item the effective cone of small Hassett spaces is generated by strata, and
        \item small Hassett spaces form an inductive family, in the sense that their strata are themselves isomorphic to small Hassett spaces.
    \end{enumerate}
    These two facts, together with Lemma \ref{keelmc}, make the induction argument possible. 
    The next natural case of Hassett spaces to consider is when they parametrize stable curves with at most two irreducible components. These Hassett spaces form an inductive family. However, their effective cone is not generated by 1-codimensional strata in general, as $\overline M_{0,6}$ is of this kind. An inductive argument might still be possible if the effective cone turns out to be finitely generated with generators of a known form. In conclusion, we leave open the question for this family. 
\end{rmk}

\printbibliography
\let\thefootnote\relax\footnotetext{\textit{Institutional email address}: \url{davide.ricci@uvsq.fr}

\textit{Permanent email address}: \url{davide.ricci314@gmail.com}}
\end{document}